\definecolor{rouge}{rgb}{0.85,0.1,.4}
\definecolor{bleu}{rgb}{0.1,0.2,0.9}
\definecolor{violet}{rgb}{0.7,0,0.8}
\theoremstyle{theorem}
\newtheorem{thm}{Theorem}[section]
\newtheorem{lemma}[thm]{Lemma}
\newtheorem{claim}[thm]{Claim}
\newtheorem{prop}[thm]{Proposition}
\newtheorem{cor}[thm]{Corollary}
\theoremstyle{remark}
\newtheorem{defi}[thm]{Definition}
\newtheorem{rem}[thm]{Remark}
\newtheorem{ex}[thm]{Example}
\newtheorem{conj}[thm]{Conjecture}
\theoremstyle{theorem}
\def\theo_alpha{\Alph{theo_alpha}}
\def\theo_alpha{\Alph{theo_alpha}}
\def\theo_alpha{\Alph{theo_alpha}}
\def\le{\leqslant}
\def\ge{\geqslant}
\def\F{\mathscr F}
\def\V{\mathcal V}
\def\X{\mathscr X}
\def\D{\mathscr D}
\def\O{\mathscr O}
\def\K{\mathscr K}
\def\Z{\mathbb Z}
\def\Q{\mathbb Q}
\def\N{\mathbb N}
\def\C{\mathbb C}
\def\P{\mathbb P}
\def\tP{\tilde{P}}
\def\T{T}
\def\d{{d}}
\def\diag{\mathrm{diag}}
\def\CC{\mathscr C}
\DeclareMathOperator{\Hom}{Hom}
\DeclareMathOperator{\Spec}{Spec}
\newcommand{\isomap}{\stackrel{\sim\,}{\longrightarrow}}
\begin{document}

\author[Victor Batyrev]{Victor Batyrev\textsuperscript{1}}
\address{\textsuperscript{1}Victor Batyrev, Mathematisches 
Institut, Universit\"at T\"ubingen,
72076 T\"ubingen, Germany}
\email{victor.batyrev@uni-tuebingen.de}

\author[Anne Moreau]{Anne Moreau\textsuperscript{2}}
\address{\textsuperscript{2}Laboratoire Paul Painlev\'{e}\\ 
Universit\'{e} Lille 1
\\ 59655 Villeneuve d'Ascq Cedex\\ France}
\email{anne.moreau@univ-lille.fr}

\title{Satellites of spherical subgroups}

\subjclass{14L30,14M27}
\keywords{Spherical homogeneous space, satellite, 
homogeneous spherical data, arc space, Poincar\'{e} polynomial.}

\date{\today}

\begin{abstract}
Let $G$ be a complex connected reductive algebraic group. 
Given a   
spherical subgroup $H \subset G$ 
and a subset $I$ of the set of spherical roots of $G/H$, we   
define,  up to conjugation, a 
spherical subgroup
$H_I \subset G$ of the same dimension of $H$, called a satellite. 
We investigate various interpretations of the satellites. 
We also show a close relation between the Poincar\'{e} polynomials of the two
spherical homogeneous spaces $G/H$ and $G/H_I$.  
\end{abstract}

\maketitle

\section{Introduction} \label{S:intro}
Let $G$ be a complex connected reductive algebraic group. 
An irreducible algebraic $G$-variety $X$ is said to be {\em spherical}
if $X$ is normal and if a Borel subgroup of $G$ 
has an open orbit in $X$. An algebraic subgroup
$H\subset G$ is called {\em spherical} if the homogeneous space $G/H$ is spherical. 
In this article, we introduce and explore new combinatorial invariants 
attached to spherical subgroups of $G$ from several perspectives. 

From now on, we fix a Borel subgroup $B$ of $G$ 
and a spherical subgroup  $H$ of~$G$. 
We first collect a few standard facts about the spherical homogeneous space 
$G/H$. 
There is no 
loss of generality in assuming 
that $BH$ is Zariski dense in $G$.
A {\em spherical embedding} of $G/H$ is a normal $G$-variety $X$
together with a $G$-equivariant open embedding $j\,:\,
G/H \hookrightarrow X$ 
which allows us to identify $H$ with the stabilizer of the point
$x:= j(H) \in X$ such that $Gx \cong G/H$.
Therefore we will always regard $G/H$ as the open $G$-orbit in $X$
whenever we write $(X,x)$ for a $G/H$-embedding.

Let $U:= BH/H$ be the open dense $B$-orbit in $G/H$. 
The {\em colors} of $G/H$ are the 
irreducible components of the complement of the open set $U$ in $G/H$, 
that is, the irreducible
$B$-invariant divisors in $G/H$. 
Denote by ${\mathscr D}={\mathscr D}(G/H)$ the set of colors of $G/H$. 
The stabilizer of
$U$ in $G$ is a parabolic subgroup $P$ of $G$ containing $B$. 
Then $U \cong B/B\cap H$ is an affine variety isomorphic to $(\C^*)^r \times \C^s$,
where $s$ is the dimension of the unipotent radical $P^{u}$ of $P$. 
The number $r$ is the {\em rank} of the spherical homogeneous 
space $G/H$. Let $\mathscr{M}(U)$ be the free abelian group of rank $r$ consisting of
all invertible regular functions $f$ in the affine coordinate ring $\C[U]$ such that
$f(H) =1$. Any such regular function $f \in \mathscr{M}(U)$ is a $B$-eigenfunction
associated with some weight $\omega(f) \in {\mathscr X}^*(B)$, 
where ${\mathscr X}^*(B)$ is the lattice of characters 
of the Borel subgroup $B$. The map $f \mapsto \omega(f)$ 
yields a natural embedding of the lattice
$\mathscr{M}(U)$ into the lattice ${\mathscr X}^*(B)$.
The {\em weight lattice} of $G/H$~is 
$$M= {\mathscr X}^*(G/H):= \omega(\mathscr{M}(U)).$$ 
We denote by $\langle - ,- \rangle \, : \, M \times  N \to \Z$ the natural pairing, 
where $N:=\Hom(M,\Z)$ is the dual lattice. 

A spherical embedding $X$ of $G/H$ is said to be {\em elementary} if
it consists of exactly two $G$-orbits: a dense orbit $X^0$ isomorphic to $G/H$
and a closed orbit $X'$
of codimension one. In this case $X$ is always smooth and it is uniquely determined
by the restriction  of the divisorial valuation $v_{X'}\, :\, \C(G/H) \to \Z$ to the
free abelian group $\mathscr{M}(U)  \subset \C(G/H)$,
where
$\C(G/H)$ is the field of rational functions on $G/H$, \cite[\S\S8.3, 7.5, 8.10]{LV}.
Using the isomorphism $\omega : \mathscr{M}(U) \isomap M$ we may 
view this restriction as 
an element $n_{X'} \in N$.
This induces a bijection between the set $\V=\V(G/H)$
of $\Q$-valued discrete
$G$-invariant valuations of $\C(G/H)$ and a convex polyhedral cone in
$N_\Q:=N\otimes_\Z \Q$, 
see \cite[Proposition 7.4]{LV} or \cite[Corollary~1.8]{K}. 
The latter is usually referred to as the {\em valuation cone} of
the spherical homogeneous space $G/H$.
In this way we may identify $\V$ with the valuation cone of $G/H$.
According to the Luna-Vust theory \cite[Sect.~8]{LV},
any $G$-equivariant embedding $X$ of $G/H$ can be combinatorially described by a {\em colored fan}
in the $r$-dimensional vector space $N_\Q$. 
The classification of $G$-equivariant embeddings (the Luna-Vust theory) 
was first studied in a more general setting. For spherical
varieties, this was later simplified 
and extended to any characteristic 
by Knop in \cite{K}. 

In this classification, elementary embeddings correspond 
to uncolored cones $\Q_{\ge 0} v$ 
in $N_\Q$ with $v$ a nonzero lattice point in $N \cap \V$. 
We shall denote by $(X_v,x_v)$ the elementary embedding 
of $G/H$, with closed $G$-orbit $X'_v$, corresponding 
to a nonzero lattice point $v$ in $N \cap \V$.  
It is uniquely defined up to $G$-equivariant isomorphism. 

It is known that $\V$ is a cosimplicial cone, i.e., there exist $k \le r$ linearly independent
primitive lattice vectors $s_1, \ldots, s_k \in M$, 
called the {\em spherical roots} of $G/H$, such that
\[ \V = \{ n \in N_\Q \; | \; \langle s_i, n \rangle \le 0, \; 1 \le i \le k \}.\] 
The definition of spherical roots goes back to Luna \cite[\S1.2]{Luna01}. 
The set $\Sigma=\Sigma(G/H)$ 
of spherical roots is one of the components of the {\em homogeneous spherical data 
associated with $G/H$}, a fundamental combinatorial invariant \cite[\S2.1]{Luna01}. 
The spherical roots $s_1, \ldots, s_k$ are nonnegative integral linear 
combinations of the positive roots 
with respect to the Borel subgroup $B \subset G$ \cite[Sect.~4]{BP}.

\subsection{Main illustrating example}  \label{sub:mot}
Symmetric homogeneous spaces are  
homogeneous spaces for which the stabilizer of a typical point 
is the fixed point set of a nontrivial group involution. 
It is a standard fact that they are spherical (\cite{Vust74}). 
The lattice $N=\Hom(M,\Z)$ of a symmetric 
homogeneous space $G/H$ is naturally endowed with a root system, 
and the spherical roots of $G/H$ can be expressed by means of this 
root system (\cite[\S2]{Vust90}). 
The classification of all embeddings 
of symmetric homogeneous spaces was carried out in 
 \cite{Vust90}. 

In order to illustrate  the content of our paper, we consider the well-known example 
(see e.g.,~\cite[Example 4.1]{Br97}) 
of the group 
$G$ viewed  as a symmetric homogeneous space under $G \times G$ acting 
by left and right multiplication: $(a,b)c:=acb^{-1}$, $a,b,c \in G$.
The stabilizer of the base point 1 is $\Delta(G)$, where $\Delta$ is 
the diagonal closed immersion. 
Let $B^-$ be the unique Borel subgroup such that 
$T:=B^- \cap B$ is a maximal torus of $G$, 
and let $S$ be the set of simple roots of $G$ relatively to $(B,T)$. 
The spherical roots of $G$ (relatively to the Borel subgroup $B^- \times B$)  
are the pairs $(-\alpha,\alpha)$,  where 
$\alpha \in S$.  
Thus $M=\mathscr{X}^*(G)$ 
is identified with the root lattice of $S$, 
and the valuation cone is the antidominant Weyl chamber. 

With every subset $I \subset  S$ 
one associates a spherical subgroup $\Delta(G)_I \subset G\times G$ 
as follows. 
Let 
$P_I$ and $P_I^-$ be the two opposite parabolic subgroups containing 
$B$ and $B^-$, respectively, such
that their common Levi component $L_I:= P_I^- \cap P_I$ 
has $I$ as a set of simple roots. 
Then set 
\[ \Delta(G)_I:= \left(P_I^{u} \times (P_I^-)^{u}\right) \Delta(L_I),\]
where $P_I ^{u}$ and $(P_I ^-)^{u}$ 
are the unipotent radicals of $P_I$ and $P_I^-$, respectively. 
We notice that $(B^- \times B) \Delta(G)_I$ is Zariski dense in $G\times G$.
We call the subgroups $\Delta(G)_I$, where 
$I\subset  S$, the {\em satellites of} $\Delta(G)$.
All satellites of $\Delta(G)$ have the same dimension
$\dim G=\dim \Delta(G)$. Since $(B^- \times B)\cap \Delta(G) = 
(B^- \times B) \cap \Delta(G)_I$ for all $I \subset S$,
the weight lattices of the satellites are all equal to $M$: 
\[ M=\mathscr{X}^*(G) = {\mathscr X}^*((G \times G)/\Delta(G)_I), 
\quad \text{for all} \quad I \subset S.\]
We refer to Example \ref{exa3:arc} for another approach to this example 
for $G=GL_n$. 

When $G$ is adjoint, that is, with trivial center, 
de Concini and Procesi 
constructed a {wonderful compactification} 
of the symmetric homogeneous space $G \cong (G\times G)/\Delta(G)$ 
\cite[\S3.4]{Concini-Procesi83}. 
The datum of the satellites $\{\Delta(G)_I \, ; \, I \subset S\}$ 
allows describing the stabilizers of points in the wonderful compactification 
(cf.~\S\ref{sub:toroidal_compactifications}) and computing  its Poincar\'{e} polynomial 
(cf.~\S\ref{sub:Poincare} 
and Example~\ref{exa:P}).

Our purpose is to generalize the above construction 
to an arbitrary spherical homogeneous space $G/H$.

\subsection{Brion subgroups} \label{sub:Brion}
Choose a nonzero primitive lattice point $v$ in the valuation cone
$\V \subset N_\Q$. 
The total space $\tilde{X}_v$ of the normal bundle 
to the $G$-invariant
divisor $X'_v$ in the elementary embedding $X_v$ 
has a natural $G$-action. 
One can show that
$ \tilde{X}_v$ is again a spherical
$G$-variety containing exactly two $G$-orbits: the zero section of the normal bundle
(it is isomorphic to $X'_v$) and its open complement $\tilde{X}_v^0$. 
The stabilizer $H_v$ of
a point in the open $G$-orbit  $\tilde{X}_v^0$ will be referred to as the 
{\em Brion subgroup} corresponding to $v \in \V$. Such a subgroup
was first studied by Brion in
\cite[\S1.1]{B90}\footnote{In \cite{B90}, $H$ is assumed to be
equal to its normalizer, but this assumption is superfluous in the construction of
the subgroup $H_v$.}.
It is defined up to $G$-conjugation. 
The above construction of the subgroup $H_v$ will be explained in greater detail  
in Sect.~\ref{sec:def}. 
In Sect.~\ref{sec:alg}, we indicate an algebraic description of Brion subgroups. 

We can alternatively define the Brion subgroup $H_v$ by considering
the total space $\tilde{X}_v^{\vee}$ of the conormal
bundle of $X'_v$
in $X_v$. To be more specific, if $(\tilde{X}_v^{\vee})^0$ stands for the open complement
of the zero section of $\tilde{X}_v^{\vee}$, then
$(\tilde{X}_v^{\vee})^0 \cong G/H_v$ and $(\tilde{X}_v^{\vee})^0$
is spherical, \cite[\S2.1]{Pa}.
Furthermore, $\tilde{X}_v^\vee$ is an elementary spherical embedding of
$G/H_v$.
Note that $\tilde{X}_v^{\vee}$ has a natural symplectic structure whose induced
$G$-action is Hamiltonian.

If $v=0$, we simply declare the Brion subgroup $H_v$ to be $H$. 

Our main observation is the following (see Theorem \ref{Th:inv-data} 
for a more precise formulation):
\begin{thm} \label{Th:inv-intro}
The Brion subgroup
$H_v \subset G$
only depends, up to conjugation,  on the minimal face $\V(v)$ 
of the valuation cone $\V$ containing $v$.
\end{thm}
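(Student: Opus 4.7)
The plan is to establish Theorem \ref{Th:inv-intro} by computing the homogeneous spherical datum $(M_v, \mathcal S_v, \Delta_v, \varphi_v)$ of the spherical homogeneous space $G/H_v = \tilde X_v^0$ and showing that each ingredient depends only on the minimal face $\V(v)$ of $\V$ containing $v$. Once this is done, Losev's uniqueness theorem for spherical subgroups yields at once that $H_v$ is determined by $\V(v)$ up to $G$-conjugation.

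The weight lattice is the easiest part: since $\tilde X_v^0 \to X'_v$ is a $G$-equivariant $\G_m$-bundle (the complement of the zero section in the normal line bundle), a direct analysis of $B$-eigenfunctions on the open $B$-orbit yields $\mathcal X^*(G/H_v) = M$. For the spherical roots, observe that the colored fan of $\tilde X_v$ consists of two uncolored faces, namely $\{0\}$ and $\Q_{\ge 0} v$, so $v$ corresponds to a principal $G$-invariant divisor on $\tilde X_v$ and the valuation cone $\V_v$ of $G/H_v$ acquires $\Q v$ in its lineality space. The remaining facets of $\V_v$ are then cut out precisely by those $s \in \mathcal S$ with $\langle s, v\rangle = 0$, giving
\[
\mathcal S_v \; = \; \{ s \in \mathcal S : \langle s, v \rangle = 0 \},
\]
a set that manifestly depends only on the face $\V(v)$ (since $v$ lies in its relative interior).

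The colors and the color map require more care. The open $B$-orbit $U = BH$ of $G/H$ should be $B$-equivariantly identified with the open $B$-orbit of $G/H_v$ via the birational isomorphism coming from the normal bundle construction, inducing a bijection between the color sets $\Delta$ and $\Delta_v$; one then checks that the valuation associated to each color of $G/H_v$ matches that of the corresponding color of $G/H$, so $\varphi = \varphi_v$ under this identification. Combining these computations, the homogeneous spherical datum of $G/H_v$ depends only on $\V(v)$, and Losev's uniqueness theorem concludes the proof.

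I expect the principal obstacle to lie in the color step: a priori some colors of $G/H$ might degenerate onto the closed orbit $X'_v$ of $\tilde X_v$ and thus fail to meet the open $G$-orbit $\tilde X_v^0$, and this must be ruled out. Identifying the precise $B$-stable open subset on which the birational identification $G/H_v \dashrightarrow G/H$ is defined, and verifying that it meets every color of $G/H$, is the technical heart of the argument; this is also where it matters that the normal bundle and conormal bundle constructions of Section \ref{sec:def} produce isomorphic spherical homogeneous spaces, a compatibility I would use to transfer color data in whichever direction is more convenient.
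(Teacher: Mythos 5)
Your overall strategy --- compute the homogeneous spherical datum of $G/H_v$ and invoke Losev's uniqueness theorem --- is exactly the strategy of the paper, and your treatment of the weight lattice and of the spherical roots (via $M_v=M$ and $\V(G/H_v)=\V+\Q v$, cf.~Propositions~\ref{pro:sat} and~\ref{pro:alg}) is essentially sound. The gap is in the color step, and it is not of the kind you anticipate: the bijection between the color sets of $G/H$ and of $G/H_v$ that you hope to establish by ``ruling out'' degenerations is simply false in general, because colors really do degenerate, merge, and change type under the passage to the normal bundle. Already for $G=SL(2)$, $H=T$ (Example~\ref{exa:2}), the space $G/H$ has two colors, both of type $a$, while $G/H_v\cong G/U_2$ is horospherical and has a single color of type $b$. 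The correct statement, which is the content of Theorem~\ref{Th:inv-data}, is that only the type-$a$ colors enter Luna's quadruple and that
\[
{\mathcal D}^a_v=\{\,D_i\in{\mathcal D}^a \mid J(i)\cap({\mathcal S}\cap v^\perp)\neq\varnothing\,\},
\]
with $\rho_v$ the restriction of $\rho$: the set of $a$-colors \emph{shrinks} according to which spherical roots survive. Your proposed identification of the full color sets with matching valuations would therefore produce the wrong datum whenever ${\mathcal S}\cap v^\perp\neq{\mathcal S}$.

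A second, related point is that the $B$-equivariant identification of the open $B$-orbits (this is Proposition~\ref{pro:alg}) cannot by itself control the colors, since the colors live in the complement of the open $B$-orbit. The mechanism the paper uses instead is to realize $G/H_v$ as the closed $G\times\C^*$-orbit of the elementary embedding of $G/H\times\C^*$ attached to the vector $(v,1)\in N\oplus\Z$ (deformation to the normal cone), and then to apply the Gagliardi--Hofscheier theorem on homogeneous spherical data of closed orbits in simple toroidal embeddings (Corollary~\ref{colors-data}); the key point making the translation back to $G$-data painless is that the projection $N\oplus\Z\to(N\oplus\Z)/\langle(v,1)\rangle$ restricts to the identity on $N$, so neither the lattice $M$ nor the $\rho$-images of the surviving $a$-colors are distorted. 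Without this input, or some substitute for it, your computation of the datum of $G/H_v$ does not close.
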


Theorem \ref{Th:inv-intro} 
is proved in Sect.~\ref{sec:Luna}.  It follows  
from the description of the {\em homogeneous spherical data} 
introduced by Luna \cite{Luna01}
associated with the homogeneous space $G/H_v$ (see Theorem \ref{Th:inv-data}).
Set
\[I(v):= \{ s_i \in \Sigma \; | \;
\langle s_i, v \rangle =  0 \}.\]
Then the minimal face $\V(v)$ of the valuation cone $\V$ containing $v$
is determined by the equations $\langle s_i, v \rangle =  0$, 
where $s_i$ runs through the set $I(v)$.

In particular, if $v$ is an interior lattice point of the
valuation cone $\V$, then \hbox{$I(v) =\varnothing$} 
and the Brion subgroup $H_v$ is known to be
{\em horospherical},
i.e., $H_v$ contains a maximal unipotent subgroup of $G$, \cite[Corollary 3.8]{BP} 
(see also Proposition~\ref{pro:sat}~(2)).
The horospherical
satellite $H_{\varnothing}$ of $H$ is closely related
to a flat deformation of the spherical homogeneous space $G/H$
to the horospherical one $G/H_{\varnothing}$
(it has already appeared in the papers of Alexeev-Brion \cite{AM} 
and Kaveh \cite{Kav}, using the previous work of Popov \cite{Popov87}).  

We remark that for any subset $I \subset \Sigma$ there exists a primitive lattice
point $v \in \V$ such that $I(v)=I$, unless $I= \Sigma$ and $\Sigma$
is a basis of $M_\Q:=M \otimes_\Z \Q$.
This provides a natural reversing bijection between the  
faces of the valuation cone $\V$
and the subsets $I \subset  \Sigma$.
Therefore the conjugacy class of a Brion subgroup $H_v$ only depends
on the subset $I(v)$.
Given a subset $I \subset \Sigma$, $I\not=\Sigma$, we define up to conjugacy
the {\em spherical satellite} $H_I$ of $H$ as a Brion subgroup $H_v$, where $I= I(v)$.
For $I=\Sigma$ we simply set $H_{\Sigma}:=H$.

\subsection{Stabilizers in toroidal compactifications} \label{sub:toroidal_compactifications} 
A spherical embedding $X$ of $G/H$ is called {\em simple} if $X$ has a unique
closed $G$-orbit, 
and a simple spherical embedding $X$ is called {\em toroidal} 
if no color $D \in {\mathscr D}$ 
contains the unique closed $G$-orbit in its closure.  
Thus a spherical embedding $X$ is simple and toroidal 
if and only if its fan has no color and contains a unique cone of maximal dimension. 

If the number $k= |\Sigma|$ of spherical roots of $G/H$ is equal to the rank $r$
(i.e., the maximal possible number of spherical roots), then the valuation cone 
$\V$ is strictly convex (i.e., it contains no line). 
If so, 
according to the Luna-Vust theory,
$G/H$ admits a canonical normal projective $G$-equivariant 
simple toroidal 
compactification $X$ such
that we have a natural bijection $I \leftrightarrow X_I$ between the 
subsets $I \subset \Sigma$
and the $G$-orbits $X_I \subset X$. We show that the normalizer 
of the stabilizer of any point 
in the $G$-orbit $X_I$ is equal, up to $G$-conjugacy, to the 
normalizer $N_G(H_I)$ of the spherical satellite $H_I$
(cf.~Proposition~\ref{Pro:norm}).
This stabilizer is exactly $N_G(H_I)$ if $I=\varnothing$.
In particular, the unique closed $G$-orbit $X_{\varnothing}$ is the smooth projective
homogeneous $G$-variety $G/N_G(H_{\varnothing})$. 

It is well-known that in the case where $G/H$ is horospherical, 
for each spherical embedding $X$ of $G/H$, 
a conjugate of the stabilizer of any point $x \in X$ contains 
the subgroup $H$. In \cite[Proposition 2.4]{BM},  
such stabilizers are explicitly described. 
Our notion of satellites allows us to extend this fact 
to the more general case of spherical varieties. 
More precisely, since all satellites $H_I$ $(I\subset \Sigma)$
have the same weight lattice
$M$ (cf.~Proposition \ref{pro:sat} (1) or Proposition~\ref{pro:alg}), we may 
view every lattice
point $m \in M$ as a character of the normalizer $N_G(H_I)$ of $H_I$ in $G$
(cf.~\cite[Theorem 4.3]{Br97} or Lemma \ref{lem:R}), and
we obtain:

\begin{thm} \label{Th:stab}
Let $X$ be a simple toroidal spherical embedding of $G/H$
corresponding to  an uncolored cone $\sigma$ in $N_\Q$.
Denote by  $I(\sigma)$ the set of all spherical
roots in $\Sigma$ that vanish on
$\sigma$. Choose 
a point $x'$ in the unique closed $G$-orbit $X'$ of $X$, 
 and let $G_{x'}$ be its stabilizer 
in $G$. Then, up to a conjugation, we have the inclusions:
\[ H_{I(\sigma)} \subset G_{x'} \subset N_G(H_{I(\sigma)}). \]
Moreover, there is a homomorphism (given by Lemma \ref{lem:R})
from $N_G(H_{I(\sigma)})$ to the torus $\Hom(\sigma^\perp \cap M,\C^*)$
whose kernel is $G_{x'}$.
\end{thm}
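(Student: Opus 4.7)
The plan is to reduce to a Luna--Vust local model near $x'$ and combine it with Theorem \ref{Th:inv-intro} and the fact (Lemma \ref{lem:R}, from \cite[Theorem 4.3]{Br97}) that $N_G(H_I)/H_I$ is canonically isomorphic to $\Hom(M,\C^*)$ for every satellite. This identification is what turns a lattice point $m \in M$ into a character of $N_G(H_{I(\sigma)})$, as used in the statement.

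First I would treat the elementary case $\sigma = \Q_{\ge 0}v$. Let $\pi:\tilde{X}_v \to X'_v$ be the projection of the normal bundle onto its zero section, and fix a point $\tilde{x}$ in the open orbit $\tilde{X}_v^0 \cong G/H_v$ with $x':=\pi(\tilde{x})$. Since $\pi$ is $G$-equivariant, $H_v = \Iso_G(\tilde{x}) \subset \Iso_G(x')$. Conversely, $\Iso_G(x')$ acts linearly on the one-dimensional fiber $\pi^{-1}(x') \cong \C$; the kernel of this character is precisely $H_v$ (the residual isotropy on the fiber), and its image lies in $\C^* = \Aut(\pi^{-1}(x')\setminus\{0\})$, so $\Iso_G(x') \subset N_G(H_v)$. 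This gives the sandwich when $\sigma$ is a ray.

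For a general cone $\sigma$ of dimension $d$ I would iterate the Brion construction along a complete flag $\{0\}=\tau_0 \subsetneq \tau_1 \subsetneq \cdots \subsetneq \tau_d = \sigma$ of faces of $\sigma$: replacing the ambient embedding at each step by the total space of the normal bundle of a $G$-invariant divisor produces a new simple toroidal embedding of a spherical homogeneous space whose valuation cone has collapsed along one more face, while the closed $G$-orbit $X'$ is preserved throughout. After $d$ steps the isotropy of the dense orbit is, by Theorem \ref{Th:inv-intro}, conjugate to $H_{I(\sigma)}$. Applying the elementary sandwich argument at each step and composing then yields $H_{I(\sigma)} \subset \Iso_G(x') \subset N_G(H_{I(\sigma)})$ up to conjugation.

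For the character statement I would invoke the Luna--Vust local structure theorem: near $x'$, the embedding $X$ admits a $B$-stable affine open chart of the form $P_u \times Z_\sigma$, where $Z_\sigma := \Spec\C[\sigma^\vee \cap M]$ is the affine toric variety for the torus $N_G(H_{I(\sigma)})/H_{I(\sigma)} \cong \Hom(M,\C^*)$ associated with $\sigma$. The point $x'$ corresponds to $(e,z_\sigma)$, where $z_\sigma$ is the unique torus-fixed point of $Z_\sigma$, whose stabilizer in $\Hom(M,\C^*)$ is the subtorus $\Hom(M/(\sigma^\perp \cap M),\C^*)$, i.e., the kernel of the restriction surjection $\Hom(M,\C^*) \twoheadrightarrow \Hom(\sigma^\perp \cap M,\C^*)$. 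Pulling this surjection back along $N_G(H_{I(\sigma)}) \twoheadrightarrow \Hom(M,\C^*)$ yields the required homomorphism; its kernel is $\Iso_G(x')$, since the inclusion $\Iso_G(x') \subset \ker$ is immediate from $\Iso_G(x')$ fixing $z_\sigma$ through the local chart, and the reverse inclusion follows from the dimension count $\dim \Iso_G(x') = \dim H_{I(\sigma)} + \dim \sigma$ (closed orbit has codimension $\dim \sigma$ and all satellites have $\dim H$). I expect the delicate point to be the iteration step above, where one must check that the chain of Brion subgroups produced by collapsing successive faces of $\sigma$ matches the single satellite $H_{I(\sigma)}$ up to conjugation---this is precisely the invariance supplied by Theorem \ref{Th:inv-intro}.
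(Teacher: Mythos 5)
Your overall strategy is genuinely different from the paper's: the paper never iterates the Brion construction along a flag of faces of $\sigma$. Instead it fixes a single nonzero $v\in\sigma$, uses the $G$-equivariant morphisms $X_v\to X\to\widehat{X}$ (where $\widehat{X}$ is the canonical complete simple embedding of $G/N_G(H)$) to get the chain $H_{I(\sigma)}=H_v\subset H_v'\subset \Iso_G(x')\subset \Iso_G(\widehat{x})$ in one stroke, and then deduces $N_G(\Iso_G(x'))=N_G(H_{I(\sigma)})$ from Proposition~\ref{Pro:norm} and \cite[Lemma 30.2]{T}. As written, your iteration has a real gap: the ``elementary sandwich'' compares $H_{v_{i+1}}$ with the isotropy group of a point in the \emph{codimension-one} orbit of the relevant divisor, whereas $x'$ sits in the closed orbit of codimension $\dim\sigma$, so the sandwiches at the successive steps do not concatenate into a statement about $\Iso_G(x')$ without further argument. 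Moreover, the claim that the tower of Brion subgroups $(\cdots(H_{v_1})_{v_2}\cdots)_{v_d}$ equals $H_{I(\sigma)}$ is \emph{not} what Theorem~\ref{Th:inv-intro} supplies: that theorem compares Brion subgroups of the fixed $H$, not Brion subgroups of Brion subgroups. Transitivity would have to be extracted from Theorem~\ref{Th:inv-data} applied to each intermediate homogeneous space, together with a verification that the normal-bundle degeneration of a simple toroidal embedding along one of its boundary divisors is again a simple toroidal embedding of the intermediate satellite with the same closed orbit and the same isotropy at $x'$. None of this is impossible, but it is substantial unproven machinery.

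The character statement also has two concrete problems. First, $N_G(H_I)/H_I\to\Hom(M,\C^*)$ is only \emph{injective} (that is the content of Lemma~\ref{lem:R}); it is not surjective in general --- e.g.\ for $G=SL(2)$, $H=N_G(T)=H_{\mathcal{S}}$ one has $N_G(H)/H$ trivial while $\Hom(M,\C^*)\cong\C^*$ --- so your surjection $N_G(H_{I(\sigma)})\twoheadrightarrow\Hom(M,\C^*)$ and the identification of $Z_\sigma$ as a toric variety for $N_G(H_{I(\sigma)})/H_{I(\sigma)}$ both fail. (The composite map to $\Hom(\sigma^\perp\cap M,\C^*)$ still exists, but your description of its image and of the stabilizer of $z_\sigma$ does not.) Second, identifying the kernel with $\Iso_G(x')$ by the dimension count $\dim\Iso_G(x')=\dim H+\dim\sigma$ only pins the group down up to finite index, and connected components are exactly where these isotropy groups are delicate (all the $U_2$, $U_4$ phenomena in the paper's examples live there). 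The paper avoids both issues by applying Lemma~\ref{lem:R} to the spherical subgroup $\Iso_G(x')$ itself, whose weight lattice is $M\cap\sigma^\perp$, once it knows $N_G(\Iso_G(x'))=N_G(H_{I(\sigma)})$; that gives the exact equality $\Iso_G(x')=\ker\bigl(N_G(H_{I(\sigma)})\to\Hom(\sigma^\perp\cap M,\C^*)\bigr)$ with no dimension count. I would encourage you to either adopt the paper's route through $G/N_G(H)$ or, if you want to keep the degeneration picture, to supply the transitivity of satellites and replace the dimension count by an application of Lemma~\ref{lem:R} to $\Iso_G(x')$.
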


Since every simple spherical embedding of $G/H$ is dominated by
a simple toroidal one, it follows that the stabilizer of a point in any
spherical embedding of $G/H$ contains, up to conjugation, one of the spherical
satellites $H_I$ (cf.~Corollary~\ref{cor:stab}). 

\subsection{Limits of stabilizers of points in arc spaces}
Let $\mathscr{O}:=\C[[t]]$ be the ring of formal power series, and
let $\mathscr{K}:=\C((t))$ be its field of fractions. We write 
$(G/H)(\K)$ for the set of $\K$-valued points of the spherical homogeneous
space $G/H$, and $G(\O)$ for the group of $\O$-valued points of the reductive group
$G$. 
Luna and Vust 
established  in \cite[\S4]{LV} 
a natural bijection between
lattice points $v$ in $\V$ and  $G(\O)$-orbits in $(G/H)(\K)$.
One can choose a
representative $\widehat{\lambda}_v$ of the $G(\O)$-orbit 
attached to $v$ 
using an appropriate 
one-parameter subgroup  in $G$. 
The reader is referred to Sect.~\ref{sec:arc} for more details. 

\begin{thm}  \label{Th:arc}
The Brion subgroup $H_v$ consists of limits when $t$ goes to $0$
of elements in the stabilizer 
 in $G(\O)$ of $\widehat{\lambda}_v$.
\end{thm}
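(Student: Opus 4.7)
The plan is to interpret the arc $\widehat\lambda_v\in (G/H)(\K)$ geometrically inside the elementary embedding $X_v$ of $G/H$, so that the statement reduces to the very definition of the Brion subgroup $H_v$ recalled in Section~\ref{sub:Brion}.

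First I would fix a concrete representative: choose a one-parameter subgroup $\lambda_v\colon \G_m\to G$ whose image in $N$ is $v$, and set $\widehat\lambda_v := \lambda_v(t)\cdot x_0$, with $x_0\in G/H\subset X_v$ the base point. Because $v\in\V$, the morphism $\G_m\to X_v$, $t\mapsto \lambda_v(t)\cdot x_0$, extends to $\Spec\O\to X_v$; its special point $x'_v$ lies on $X'_v$, and since $v$ is primitive the extension is transverse to $X'_v$. This transverse datum gives a nonzero vector in the normal fibre of $X'_v\subset X_v$ at $x'_v$, that is, a point $\tilde x'_v$ of the open $G$-orbit $\tilde X_v^0\cong G/H_v$. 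By the very construction of $H_v$, its stabilizer $\Iso_G(\tilde x'_v)$ equals $H_v$ for this choice.

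Forward inclusion. Let $g(t)\in \Iso_{G(\O)}(\widehat\lambda_v)$, so that $g(t)\cdot\lambda_v(t)\cdot x_0 = \lambda_v(t)\cdot x_0$ in $(G/H)(\K)$. Since the $\O$-extension of a $\K$-point of $G/H\subset X_v$ is unique, this equality also holds in $X_v(\O)$. Setting $t=0$ shows $g(0)\cdot x'_v = x'_v$, and differentiating in $t$, modulo $T_{x'_v}X'_v$, shows that $g(0)$ fixes the normal vector $\tilde x'_v$; hence $g(0)\in \Iso_G(\tilde x'_v) = H_v$.

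Reverse inclusion. Given $h\in H_v$ I would construct $g(t)\in G(\O)$ with $g(0)=h$ and $\lambda_v(t)^{-1}g(t)\lambda_v(t)\in H(\K)$. The key is a weight-by-weight analysis under the decomposition $\mathfrak{g}=\bigoplus_\alpha\mathfrak{g}_\alpha$ induced by $\mathrm{Ad}\circ\lambda_v$: the operator $\mathrm{Ad}(\lambda_v(t))$ scales $\mathfrak{g}_\alpha$ by $t^{\langle\alpha,v\rangle}$, and a direct computation identifies $\mathrm{Lie}(H_v)$ with the $t=0$ value of the $\O$-module $\{X(t)\in\mathfrak{g}[[t]] : \mathrm{Ad}(\lambda_v(t))^{-1}X(t)\in\mathfrak{h}(\K)\}$. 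This yields a first-order lift of $h$, and a Hensel/formal-smoothness argument applied to the orbit map $G(\O)\to (G/H)(\K)$, $g\mapsto g\cdot\widehat\lambda_v$, extends it to a full element of $\Iso_{G(\O)}(\widehat\lambda_v)$ with prescribed specialization $h$.

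The main obstacle is precisely this last step: one must prove that $\Iso_{G(\O)}(\widehat\lambda_v)$ is a (formally) smooth group ind-scheme and that its reduction modulo $t$ hits all of $H_v$, not merely a subgroup. This hinges on the weight calculation above, matching the graded pieces of $\mathrm{Lie}(H_v)$ with the correct $t$-adic filtration on $\mathfrak{h}$; once this Lie-algebra identification is in place, formal smoothness propagates the infinitesimal lift to all orders, closing the converse inclusion.
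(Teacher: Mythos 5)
Your forward inclusion is sound and is essentially the paper's argument: the transversality of the extended arc to $X'_v$, which produces a distinguished nonzero normal vector at $x'_v$ fixed by $\gamma(0)$, is exactly Lemma~\ref{Lem:tangent}, and specializing/differentiating the identity $\gamma(t)\lambda_v(t)x_v=\lambda_v(t)x_v$ then puts $\gamma(0)$ in the kernel of $\chi_v$. The reverse inclusion, however, has a genuine gap, and it is precisely where you locate the ``main obstacle.'' Two distinct problems are left open. First, the Hensel/formal-smoothness step is not carried out, and it is not routine: $\Iso_{G(\O)}(\widehat\lambda_v)$ is an infinite-type ind-scheme, and the orbit map $G(\O)\to (G/H)(\K)$ is not a map of finite-type schemes to which standard formal smoothness or Hensel lifting applies off the shelf; you would need to set up and justify the lifting order by order. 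Second, and more fundamentally, even a complete Lie-algebra-level lifting argument can only show that the reduction map hits the identity component $H^0_v$; since $H_v$ is in general disconnected, an infinitesimal argument cannot reach the other components, and your proposal offers nothing to bridge this.

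The paper's proof of Theorem~\ref{Th2:arc} sidesteps both issues. For the connected part it uses Brion's limit formula $\mathrm{Lie}(H_v)=\lim_{t\to 0}\mathrm{Ad}(\lambda_v(t))\mathfrak h$ (the same weight computation you indicate) not to produce a first-order lift, but to write down the \emph{exact} arc $\tilde\gamma(t)=\lambda_v(t)\exp_H(\xi)\lambda_v(t^{-1})$: this lies in the isotropy group identically in $t$ by construction, and the only thing to verify is that it extends over $t=0$ with the prescribed limit $\exp_{H_v}(\eta_0)$ — which is exactly what the limit formula gives. No Hensel lemma is needed. For the component group it invokes the Brion--Pauer decomposition $H_v\subset H^0_v\,(C\cap H_v)$ together with Lemma~\ref{lem:CH} ($C\cap H_v=C\cap H$), so that the residual factor $c\in C\cap H_v$ is already in $H$, commutes with $\lambda_v$, and can be realized by the constant arc $c$. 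To close your argument you would need both of these inputs (or substitutes for them); as written, the converse inclusion is a program rather than a proof.
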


Theorem \ref{Th:arc} is helpful in practice to compute the satellites, 
cf.~Example \ref{exa3:arc} and 
Example \ref{exa4:arc}. Its proof is performed in Sect.~\ref{sec:arc}. 

\subsection{Poincar\'{e} polynomial of satellites} \label{sub:Poincare}
To every complex algebraic variety $X$ we associate its {\em
virtual Poincar\'{e} polynomial} $P_X (t)$, uniquely determined by the following properties: 
\begin{enumerate}
\item $P_X (t) = P_Y (t)+ P_{X \setminus Y} (t)$ for every closed subvariety $Y$ 
of $X$,
\item if $X$ is smooth and complete, then $P_X (t) =\sum_{m}
\dim H^m(X) t^m$ is the usual Poincar\'{e} polynomial.
\end{enumerate}
Then $P_X(t)=P_Y(t) P_F(t)$ for every fibration $
F \hookrightarrow X \to Y$ which is locally trivial for the Zariski topology. 
The existence of the virtual Poincar\'{e} polynomial for every complex algebraic variety $X$ 
follows from the existence of Deligne's mixed Hodge structure on the 
cohomology groups $H_c^\bullet(X)$ of $X$ with compact supports and 
complex coefficients, which yields a polynomial $E_X(s,t)$ in two variables 
(see for example \cite[\S3]{BatyrevDais} or \cite[\S1]{DanilovKhovanskii}). 
We have $P_X(t)=E_X(-t,-t)$. 

In \cite{BPe}, Brion and Peyre investigated  
the virtual Poincar\'{e} polynomials  
of homogeneous spaces 
under complex connected linear
algebraic groups.  
In particular, they showed that they are 
polynomials in~$t^2$. They also  
obtained a factorization result 
for the virtual Poincar\'{e} polynomial of {\em regular embeddings} 
in the sense of \cite{BifetConciniProcesi} 
of such homogeneous 
spaces, provided that the stabilizer of a typical point 
is connected. 

Accordingly, if $X$ is a spherical homogeneous space $G/H$, 
then the function $P_X(t^{1/2})$ is a polynomial in $t$. 
By additivity, if $X$ is a spherical embedding of a 
spherical homogeneous space $G/H$, 
then $P_X(t^{1/2})$ is a polynomial since $X$ 
is a disjoint finite union of $G$-orbits which are all spherical homogeneous spaces. 
Therefore, for our purpose, it will be convenient to set: 
$$\tilde{P}_X(t) := P_X(t^{1/2}).$$
For example, $\tilde{P}_X(t)=(t-1)^r t^s$ if $XÊ\cong (\C^*)^r \times \C^s$. 
Abusing of notations, we continue to call the function $\tilde{P}_X$ 
the {\em virtual Poincar\'{e} polynomial} of $X$. 

A $G$-variety $X$ is said to be {\em wonderful} (cf.~Introduction of~\cite{Luna96}) 
of rank $r$ if it is complete and smooth,
with an open $G$-orbit whose complement is the union of $r$ smooth irreducible 
$G$-divisors $Z_1, \ldots, Z_r$, 
any subset of these irreducible $G$-divisors has a transversal and non-empty intersection, 
and these intersections are exactly all $G$-orbit closures of $X$.

A wonderful embedding of $G/H$ (that is, a $G/H$-embedding 
which is a wonderful $G$-variety) is unique, up to a $G$-equivariant isomorphism,  
if it exists 
(see e.g., \cite[\S1.3]{Luna01} or
\cite[Sect.~30]{T}), and this happens  
for instance if $H=N_G(H)$ in which case $X$ is 
the canonical  
toroidal embedding, \cite[Corollary 7.2]{K96} and \cite[Intro.]{Luna96}). 
Wonderful embeddings were first introduced by de Concini and Procesi 
\cite{Concini-Procesi83} for
symmetric spaces, motivated by problems in enumerative geometry.

Assume that $G/H$ admits a {wonderful compactification} $X$ 
of rank $r$.  
Then the spherical homogeneous space $G/H$ has rank $r$ and 
$X$ contains exactly $2^r$ $G$-orbits $X_I$
that are parameterized by all possible subsets $I \subset \{1, \ldots, r\}$ such that 
the closure $\overline{X_I}$ is the intersection of all irreducible 
$G$-divisors $Z_i$ with $i \not\in I$. 

As the wonderful embedding $X$ is the disjoint union $\bigsqcup_{I} X_I$ 
we have 
\begin{align*}
\tP_X(t) = \sum_I \tP_{X_I}(t).
\end{align*}
On the other hand, for $I \subset \{1,\ldots,r\}$, 
the total space $\tilde{X}_I$  of the normal bundle 
to the smooth subvariety $\overline{X_I} \subset X$ 
 is a 
simple 
spherical toroidal embedding of the spherical homogeneous space $G/H_I$. 
It is direct sum of line bundles. Indeed, 
since $\overline{X_I}= \bigcap_{i \not\in I} Z_i$,  
the normal bundle $N_{\overline{X_I}|X}$ of $X$ along $\overline{X_I}$ 
is the direct sum of 
the restrictions to $\overline{X_I}$ of the line bundles  
$N_{Z_i|X}$, where $i \not\in I$.  
One has a natural locally trivial 
torus fibration $f_I\, :\, G/H_I \to X_I$ for the Zariski topology 
whose fiber is isomorphic 
to $(\C^*)^{r-|I|}$. 
Hence, we get  
\begin{align} \label{eq:Poincare_wonderful}
\tP_X(t) = \sum_{I} \frac{\tP_{G/H_I}(t)}{(t-1)^{r - | I | }}.
\end{align}
Because the valuation cone $\V \subset N_\Q$ of $G/H$ is generated
by a basis $e_1, \ldots, e_r$ of the lattice $N$, every lattice point $v \in  
N\cap \V$ can be written as a nonnegative integral linear combination 
$v = l_1 e_1 + \cdots + l_r e_r$. 
The set of 
spherical roots $\{ s_1, \ldots, s_r\} \subset M$ forms a dual basis to the basis 
$\{e_1, \ldots, e_r\}$ of the dual lattice $N$. 
Let $\V^\circ_I$ be the 
relative interior of the face $\V_I$ defined by 
the conditions $\langle x, s_i \rangle =0$ for all $i \in I$. 
Using the power expansion
\[ \frac{1}{t-1} = \frac{t^{-1}}{(1 - t^{-1})} = \sum_{j\ge 1} t^{-j}, \]
we can rewrite \eqref{eq:Poincare_wonderful} 
as 
\begin{align} \label{eq:P-wonderful}
\tP_X(t) = \sum_I \frac{\tP_{G/H_I}(t)}{(t-1)^{r - | I |}} = 
\sum_I \tP_{G/H_I}(t) \sum_{v \in \V^\circ_I} t^{\kappa(v)},  
\end{align}
where $\kappa$ is the linear function on $N$ which takes value $-1$ on the lattice vectors
$e_1, \ldots, e_r$. 

Formula \eqref{eq:P-wonderful} is an analog of the one
we established for the Poincar\'{e} polynomial of a smooth projective 
toroidal horospherical variety
(see \cite{BM}). The general version of both formulas for the {stringy} 
$E$-function of an arbitrary $\Q$-Gorenstein 
spherical embedding $X$ will appear in \cite{BM2}. It provides   
a motivic interpretation 
of the Brion-Peyre factorization result (cf.~\cite[Theorem~2]{BPe}) 
for the virtual 
Poincar\'{e} polynomial of $X$. 
This 
has motivated us to investigate the ratio of the two virtual 
Poincar\'{e} polynomials, 
\[  \frac{\tP_{G/H_I}(t)}{\tP_{G/H}(t)}, \] 
for general spherical homogeneous space $G/H$. 
Looking at some series of examples we came to the following conjecture:

\begin{conj} \label{conj:P}
Let $G/H$ be an arbitrary spherical homogeneous space. Then for any 
satellite $H_I$ of the spherical subgroup $H\subset G$, the ratio of the 
two virtual 
Poincar\'{e} polynomials
\[   \frac{\tP_{G/H_I}(t)}{\tP_{G/H}(t)} \] 
is always a polynomial $R_I$ in $t^{-1}$ with integral coefficients. 
\end{conj}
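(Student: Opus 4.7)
The plan is to approach Conjecture \ref{conj:P} by combining the combinatorial identity (\ref{eq:Poin}) with a careful analysis of the $B$-orbit structure of spherical homogeneous spaces.

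First, I would reduce to the case where $G/H$ admits a wonderful compactification $X$, possibly after passing to a suitable finite cover. In this setting, evaluating the Ehrhart sum $\sum_{v \in \V_I^\circ} t^{\kappa(v)} = (t/(1-t))^{r-|I|}$ from the data $\kappa(e_i)=-1$ turns formula (\ref{eq:Poin}) into
\[
\tP_X(t) \;=\; \sum_{I \subseteq \mathcal S} \tP_{G/H_I}(t)\Bigl(\tfrac{t}{1-t}\Bigr)^{r-|I|}.
\]
Assuming a suitable generalization of (\ref{eq:Poin}) holds for a canonical toroidal embedding $Y_J$ of each satellite $G/H_J$ (as promised in \cite{BM2}), Möbius inversion on the Boolean lattice of subsets of $\mathcal S$ yields
\[
\tP_{G/H_I}(t) \;=\; \sum_{J \subseteq I}\Bigl(\tfrac{t}{t-1}\Bigr)^{|I|-|J|}\tP_{Y_J}(t),
\]
and the conjecture translates into a divisibility statement about two alternating sums of honest polynomials $\tP_{Y_J}(t)$, each of which is a polynomial of degree $\dim G/H$ with non-negative integer coefficients.

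To establish the divisibility, I would proceed by induction on $|\mathcal S \setminus I|$. The base case $I = \mathcal S$ is trivial, giving $R_{\mathcal S} = 1$. For $I = \varnothing$, the satellite $G/H_\varnothing$ is horospherical, hence a principal $T_M$-bundle over a flag variety $G/P$, yielding $\tP_{G/H_\varnothing}(t) = (t-1)^r \tP_{G/P}(t)$, which can be compared with $\tP_{G/H}(t)$ via its $B$-orbit decomposition. Simple examples, such as $G = SL_2 \times SL_2$ with $H$ the diagonal $SL_2$, where one obtains $R_\varnothing(t^{-1}) = 1 + t^{-1}$, are consistent with the conjecture and suggest the general mechanism. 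For the inductive step, I would pick some $s \in \mathcal S \setminus I$ and use Theorem \ref{Th:arc} to interpret $H_I$ as the ``$t \to 0$'' limit of isotropy data of an arc in $G/H_{I \cup \{s\}}$. The corresponding $\mathbb G_m$-equivariant degeneration along $\lambda_v$ should yield a Bia\l{}ynicki-Birula-type decomposition that relates $\tP_{G/H_I}(t)$ to $\tP_{G/H_{I\cup\{s\}}}(t)$ by multiplication with a factor in $\Z[t^{-1}]$, from which the result would follow by combining with the inductive hypothesis.

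The main obstacle lies in the inductive step itself: showing that the degeneration multiplies the virtual Poincaré polynomial by an element of $\Z[t^{-1}]$ rather than merely by a general rational function. Unlike the case of a torus action on a smooth projective variety, the Bia\l{}ynicki-Birula decomposition of a spherical homogeneous space under $\lambda_v$ involves non-proper cells whose virtual Poincaré polynomials interact subtly with those of the limit; moreover, the examples show that the number of $B$-orbits of $G/H_I$ typically differs from that of $G/H_{I \cup \{s\}}$, so a naive orbit-by-orbit bijection is not available. Making the desired relation precise likely requires extending the arc-space analysis of the present paper to track explicitly how $B$-orbits of $G/H_{I \cup \{s\}}$ refine or ``duplicate'' into $B$-orbits of $G/H_I$. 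A practical route would be to verify the conjecture first in the rank-one and symmetric-pair cases (where the $B$-orbit combinatorics is fully explicit via Richardson-Springer theory), and to then seek a uniform inductive argument rooted in Theorem \ref{Th:arc}.
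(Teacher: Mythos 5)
The statement you are addressing is stated in the paper as a \emph{conjecture}, and the paper does not prove it in general; it only establishes it when $H$ is connected (Theorem \ref{Th2:Poincare}) and when $G/H$ has rank one (Theorem \ref{Th:P-one}). Your text is likewise not a proof but a research program, and the decisive step is exactly the one you leave open: you need the $\mathbb G_m$-degeneration from $G/H_{I\cup\{s\}}$ to $G/H_I$ to multiply the virtual Poincar\'e polynomial by an element of $\Z[t^{-1}]$, and no argument is given for this. Note that this intermediate claim (divisibility at each single step of the chain $I \subset I\cup\{s\}\subset\cdots\subset\mathcal S$) is \emph{stronger} than the conjecture itself, which only compares $I$ with $\mathcal S$, so you have replaced the problem by a harder one without a mechanism to attack it. The Białynicki-Birula heuristic does not apply as stated: $G/H_I$ is neither proper nor does it carry the required $\C^*$-action with the limit space $G/H_{I\cup\{s\}}$ as fixed locus, and for non-proper varieties the virtual Poincar\'e polynomial of a cell decomposition is additive but not multiplicative over the strata in any useful way. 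Your reduction via (\ref{eq:Poin}) also rests on an unproven generalization announced in \cite{BM2}, and the Ehrhart sum is miscomputed: since $\kappa(e_i)=-1$ one gets $\sum_{v\in\mathcal V_I^\circ}t^{\kappa(v)}=\bigl(\sum_{j\ge 1}t^{-j}\bigr)^{r-|I|}=(t-1)^{-(r-|I|)}$, not $\bigl(t/(1-t)\bigr)^{r-|I|}$, which would propagate signs incorrectly through the M\"obius inversion.

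For comparison, the paper's partial results use a much more direct mechanism that bypasses compactifications and inversion entirely. For connected $H$, Lemma \ref{lem:Brion-Peyre} lets one place a maximal reductive subgroup $H_I^{red}$ of the satellite inside $H$; the Dimca--Lehrer multiplicativity theorem applied to the fibration $G/H_I^{red}\to G/H$ with fiber $H/H_I^{red}$ gives $\tP_{G/H_I^{red}}=\tP_{G/H}\,\tP_{H/H_I^{red}}$, and the Brion--Peyre theorem converts $\tP_{G/H_I^{red}}$ into $\tP_{G/H_I}$ up to an explicit power of $t$, yielding $R_I(t^{-1})=\tP_{H/H_I^{red}}(t)\,t^{-(u_I-u_I^{red})}$, manifestly a polynomial in $t^{-1}$ with integer coefficients. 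For rank one the paper simply runs through the Akhiezer--Brion classification. If you want to make progress on the general case, the fibration-plus-reductive-part route is the one worth trying to extend to disconnected $H$, rather than the degeneration induction.
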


We prove this conjecture in several examples, notably in the case 
where the spherical group $H$ is connected (cf.~Theorem \ref{Th2:Poincare}) 
and in the case where the spherical homogeneous 
space $G/H$ is of rank one (cf.~Theorem \ref{Th:P-one}).

\subsection*{Notations}
Unless otherwise specified, we keep the notations used in Sect.~\ref{S:intro} 
 further on in the article.

\subsection*{Acknowledgments}
We are very grateful to Michel Brion for the many valuable 
discussions we have had on the topic. 
We also thank Syu Kato, Shrawan Kumar, Dmitri Panyushev, Pierre Schapira 
and Ernest Vinberg for helpful explanations,  
and Guido Pezzini 
for the counter-example in the footnote \ref{foot:counter_example}. 
Some parts of this work were done while 
the first named author 
was visiting the University of Poitiers, 
while the second named author was visiting the 
University of T\"{u}bingen and during our stay  
as ^^ ^^ Research in Pairs'' at MFO. 
We thank all these institutes for their hospitality. 
Finally, we would like to thank the referee for the 
extremely careful reading of the paper and for his comments 
and detailed suggestions which helped us to considerably improve 
the text. 

The second named author is supported in part by the ANR Project GeoLie Grant number 
ANR-15-CE40-0012, and in part by the Labex CEMPI (ANR-11-LABX-0007-01). 

\section{Brion subgroups, first properties and examples} \label{sec:def} 
The section starts with a number of results of \cite{LV}, \cite{BLV} and \cite{BP} 
concerning elementary embeddings. 
Recall that $P^{u}$ stands for the unipotent radical of the parabolic subgroup $P$. 

\begin{defi}[{\cite[\S4.2]{BLV} or \cite[\S2.9]{BP}}] 
\label{def:adapted}
Let $L$ be a Levi subgroup of $P$, and let $C$ be the neutral component
of its center. We say that $L$ is {\em $H$-adapted} if
the following conditions are satisfied:
\begin{enumerate}
\item $P\cap H=L\cap H$,
\item $P\cap H$ contains the derived subgroup $(L,L)$ of $L$,
\item for any elementary embedding $(X,x)$ of $G/H$,
the action of $P^{u}$ on $Y:=Px \cup Px'$, with $Px'$ the open
$P$-orbit in the closed orbit of $X$, induces an isomorphism
of algebraic varieties $P^{u} \times (\overline{C x}\cap Y) \to Y$.
\end{enumerate}
\end{defi}
The existence of $H$-adapted Levi subgroups  
is established in \cite[\S3 and \S4.2]{BLV}. 
Fix such an $H$-adapted Levi subgroup $L$.  
Since $P$ has an open dense orbit in $G/H$, any $f\in \mathscr{M}(U) \cong M$ is 
determined by its weight $\omega(f) \in \X^*(P)$,  
$f$  
being a $P$-eigenvector in $\C(G/H)$. 
Furthermore, the restriction to $P\cap H$ of $\omega(f)$ 
is trivial. But the sublattice of $\X^*(P)$ consisting of 
characters whose restriction to $P\cap H$ is trivial 
identifies with the lattice of characters $\X^*(\mathbb{T})$ 
of the algebraic torus $\mathbb{T}:=C/C\cap H$ since 
$P=P^{u} L$ and $H$ contains $(L,L)$. 
Here $C$ denotes, as in the above definition, the neutral component
of the center of $L$. 
As a result, we get that $M \cong \X^*(\mathbb{T})$. 
Hence, by duality, the lattice 
$N_\Q\cong \Hom(M,\Q)$ identifies 
with $\X_*(\mathbb{T}) \otimes_\Z \Q$, 
where $\X_*(\mathbb{T}) $ is the free abelian group of 
one-parameter subgroups of~$\mathbb{T}$. 

\begin{defi} 
\label{def:param}
A one-parameter subgroup $\lambda$ of $C$ is said to be {\em adapted to 
the elementary embedding $(X,x)$} if $\lim\limits_{t\to 0}\lambda(t)x$ exists 
and belongs to
the open $P$-orbit of the closed orbit. 
\end{defi}
Through the identification $N_\Q \cong \X_*(\mathbb{T}) \otimes_\Z \Q$, 
the primitive lattice points in $N\cap \V$ are in bijection 
with the indivisible one-parameter subgroups 
of $\mathbb{T}$, adapted to the different elementary embeddings of $G/H$. 
More precisely, if $v$ is a nonzero lattice point in $N\cap \V$, 
then any one-parameter subgroup 
of $C$ adapted to $(X_v,x_v)$ 
corresponds to a point 
of $N\cap \V$,  
which is equivalent to $v$, cf.~\cite[\S2.10]{BP}.

We now give 
a concrete construction of the Brion subgroups $H_v$, 
following~\cite[\S1.1]{B90}.
Fix a nonzero lattice point  $v \in N \cap \V$ 
and choose a 
one-parameter subgroup $\lambda_v$ of $C$ adapted to $(X_v,x_v)$.
Thus
$$x'_v:=\lim_{t\to 0}\lambda_v(t)x_v$$
belongs to the closed $G$-orbit $X'_v$. 
The stabilizer $H'_v$ of $x'_v$ in $G$ 
acts on the one-dimensional normal space
$\T_{x'_v}(X_v)/\T_{x'_v}(X'_v)$ via
a character $\chi_v$.
The character $\chi_v$ is nontrivial
since $H'_v$
contains the image of $\lambda_v$ which acts nontrivially 
on $\T_{x'_v}(C x_v\cup C x'_v)/{\T}_{x'_v}(Cx'_v)$, cf.~\cite[\S1.1]{B90}.
We define $H_v$ to be the kernel of $\chi_v$ in $H'_v$.
If $v=0$, we simply set $H_v=H$.
\begin{defi} \label{def:sat}
We call the subgroup $H_v$ for $v\in  N \cap \V$
a {\em Brion subgroup} of $G$.
\end{defi}
Brion subgroups are defined up to $G$-conjugacy.
They have the same dimension as $H$ and
they are spherical by \cite[Proposition 1.2]{B90} 
and the proof of \cite[Proposition~1.3]{B90}.

\begin{prop} \label{pro:sat}
Let $v \in N \cap \V$ be a lattice point
in the valuation cone of 
$G/H$. 
\begin{enumerate}
\item The weight lattice $\X^*(G/H_v)$ of the spherical homogeneous 
space $G/H_v$ is equal to the weight lattice $M$ of $G/H$.
\item The valuation cone of $G/H_v$ 
is equal to $\V+\Q v$.
In particular, $v \in \V^\circ$ if and only if the spherical 
subgroup $H_v$ is horospherical, 
where $\V^\circ$ is the relative interior of $\V$. 
\end{enumerate}
\end{prop}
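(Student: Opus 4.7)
The case $v=0$ is immediate from $H_v = H$, so assume $v \ne 0$ and keep the notation of Section~\ref{sec:def}: the elementary embedding $(X_v,x_v)$, the one-parameter subgroup $\lambda_v$ of $C$ adapted to it, the point $x'_v = \lim_{t\to 0}\lambda_v(t)x_v$ in the closed orbit with isotropy $H'_v$, and the total space $\tilde X_v$ of the normal bundle of $X'_v\subset X_v$, whose open $G$-orbit $\tilde X_v^0\cong G/H_v$ has isotropy $H_v = \ker\chi_v\cap H'_v$. After replacing $H_v$ by a conjugate I may assume $BH_v$ is dense in $G$ and that the same Levi $L\subset P$ with center $C$ is adapted to both $H$ and $H_v$; this turns (1) into the equality $\mathcal X^*(C/C\cap H) = \mathcal X^*(C/C\cap H_v)$, i.e.~$C\cap H = C\cap H_v$ modulo connected components.

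For the inclusion $C\cap H\subseteq H_v$, commutativity of $C$ gives that any $h\in C\cap H$ commutes with $\lambda_v(t)$ and fixes $x_v$, hence fixes $x'_v$ and lies in $H'_v$. Since $h$ acts trivially on $Cx_v\cong C/(C\cap H)$ by right translation, it acts trivially on $\overline{Cx_v}$ and on every tangent direction there at $x'_v$; by Definition~\ref{def:adapted}(3), such a tangent direction projects nontrivially onto the one-dimensional quotient $T_{x'_v}X_v/T_{x'_v}X'_v$, forcing $\chi_v(h)=1$, so $h\in H_v$. Conversely, the filtration of $\mathbb C[X_v]$ by powers of the ideal of $X'_v$ assigns to a $B$-eigenfunction $f\in\mathcal M(U)$ of weight $m$ the order $v(f)=\langle m,v\rangle$, and passing to the associated graded produces a $B$-eigenfunction on $\tilde X_v$ (hence on $\tilde X_v^0\cong G/H_v$) of the same weight $m$. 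This gives $M\subseteq \mathcal X^*(G/H_v)$, i.e.~$C\cap H_v\subseteq C\cap H$ up to components, and establishes (1).

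For (2), by (1) the lattice $N$ is the same for $G/H$ and $G/H_v$. The zero section $X'_v\subset \tilde X_v$ is a $G$-invariant divisor whose valuation on $\mathbb C(G/H_v)$ corresponds to $v\in N$; compactifying the line bundle $\tilde X_v$ as the $\mathbb P^1$-bundle $\mathbb P(\tilde X_v\oplus \mathcal O_{X'_v})\to X'_v$ exhibits a second $G$-invariant divisor, the section at infinity, of valuation $-v$. Thus $\pm v\in \mathcal V(G/H_v)$, and the same leading-term construction lifts every element of $\mathcal V$ to $\mathcal V(G/H_v)$, whence $\mathcal V+\mathbb Q v\subseteq \mathcal V(G/H_v)$. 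For the reverse inclusion, any spherical root $s'$ of $G/H_v$ must vanish on $v$, since otherwise the wall $\langle s',\cdot\rangle=0$ would separate $v$ from $-v$, contradicting that both lie in $\mathcal V(G/H_v)$; so $\mathcal V(G/H_v)$ is cut out by inequalities $\langle s_i,\cdot\rangle\le 0$ with $s_i\in I(v)$, which is exactly $\mathcal V+\mathbb Q v$. The horosphericity clause follows from the classical characterisation (\cite[Cor.~3.8]{BP}): $H_v$ is horospherical iff $\mathcal V(G/H_v)=N_\mathbb Q$ iff $\mathcal V+\mathbb Q v=N_\mathbb Q$ iff $v\in \mathcal V^\circ$.

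The technical heart of the argument is the leading-term / lifting construction invoked twice above: one must verify that the associated graded of $\mathbb C[X_v]$ along $X'_v$ embeds $B$-equivariantly into $\mathbb C[\tilde X_v]$ preserving $B$-eigenweights, and that every $G$-invariant valuation on $\mathbb C(G/H_v)$ really does arise, up to a multiple of $v$, from an element of $\mathcal V$. The geometric inputs on $\overline{Cx_v}$, transversality to $X'_v$ at $x'_v$, and the $\mathbb P^1$-compactification are then standard consequences of the adaptedness hypotheses of Definition~\ref{def:adapted}.
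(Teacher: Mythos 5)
Part (1) of your argument is broadly workable, and it is interesting that it follows a different route from the paper's own proof of Proposition~\ref{pro:sat}(1): your inclusion $C\cap H\subseteq H_v$ via the character $\chi_v$ is essentially the paper's Lemma~\ref{lem:CH}, and your associated-graded construction giving $M\subseteq\X^*(G/H_v)$ is the content of Proposition~\ref{pro:alg}, whereas the paper's proof of the Proposition itself deforms $X_v$ to the normal bundle inside the $G\times\C^*$-variety $\tilde{Z}_v$ and quotes \cite[\S 3.6]{BP}. However, you silently assume that the same Levi $L$ (hence the same torus $C$) is adapted to $H_v$, so that $\X^*(G/H_v)=\X^*(C/C\cap H_v)$; this is precisely the kind of statement about the local structure of $\tilde{X}_v$ that requires proof, and the paper avoids it by working either purely with the filtered ring $\widetilde{A_v}$ or with the Luna--Vust combinatorics of $\tilde{Z}_v$.

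The genuine gap is in part (2): you never establish the inclusion $\V(G/H_v)\subseteq\V+\Q v$. From $\pm v\in\V(G/H_v)$ you correctly deduce that every spherical root $s'$ of $G/H_v$ vanishes on $v$, but the jump to ``$\V(G/H_v)$ is cut out by the inequalities $\langle s_i,\cdot\rangle\le 0$ with $s_i\in I(v)$'' presupposes that the spherical roots of $G/H_v$ are exactly $I(v)$ --- which is essentially Theorem~\ref{Th:inv-data}, i.e.\ the thing one is trying to prove. What your observation actually yields (even strengthened, using $\V\subseteq\V(G/H_v)$, to $s'\in\mathrm{cone}(I(v))$ for every spherical root $s'$ of $G/H_v$) is only a lower bound: it shows $\V(G/H_v)\supseteq\{n\mid\langle s_i,n\rangle\le 0,\ s_i\in I(v)\}=\V+\Q v$, the same inclusion you already had. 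Nothing in your argument excludes that $\V(G/H_v)$ is strictly larger (for instance all of $N_\Q$, which would make every $H_v$ horospherical). The paper closes this by citing \cite[Theorem 3.6]{BP}, which computes the valuation cone of the closed orbit of an elementary embedding exactly, as a quotient by $\Q(v,1)$, giving both inclusions simultaneously; some input of this strength is unavoidable. In addition, your claim that ``the same leading-term construction lifts every element of $\V$ to $\V(G/H_v)$'' is asserted rather than proved, and it is needed even for the easy inclusion. The final ``in particular'' clause is fine once the equality $\V(G/H_v)=\V+\Q v$ is actually in hand.
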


\begin{proof}
We can certainly assume that $v$ is nonzero, 
since the statement is clear for $v=0$.  

(1)
Denote by $\tilde{X}_v$ the total space of the normal bundle
to the $G$-invariant divisor $X'_v$ in $X_v$.
We use the deformation of $X_v$ to the normal bundle
$ \tilde{X}_v$ and we follow the ideas of the proof of \cite[Proposition 1.2]{B90}.

Consider the product $X_v\times\C$ as a spherical
embedding of $G/H \times \C^*$ endowed with
the natural $G \times \C^*$-action.  It is the toroidal simple spherical embedding
corresponding 
to the two-dimensional cone 
in $N_\Q \oplus \Q$ generated by $(v,0)$ and $(0,1)$. 
Let $$p: \tilde{Y}_v\to X_v\times\C$$
be the blow-up of the codimension-two closed $G \times \C^*$-orbit
$X'_v\times\{0\}$ in $X_v\times\C$. 
The preimage by $p$ of $X_v\times\{0\}$ contains a divisor 
which is isomorphic to $X_v$; 
denote by $\tilde{Z}_v$ its complement. 
Let $\rho:\tilde{Z}_v\to\C$ be the restriction
of the composition ${\rm pr}_2\circ p$ to $\tilde{Z}_v\subset \tilde{Y}_v$,
where pr$_2$ is the projection from $X_v\times\C$ to $\C$.
Then $\rho$ defines a fibration over $\C$ such that
$\rho^{-1}(\C^*)$ identifies with
$X_v\times \C^*$, and the fiber $\rho^{-1}(0)$ is isomorphic to $ \tilde{X}_v$. 
The fibration $\rho$ deforms the spherical
variety $X_v$ to $ \tilde{X}_v$.
We have a $G\times\C^*$ action on $\tilde{Z}_v$ so that
$(\tilde{Z}_v,p^{-1}(x_v,1))$
is a smooth 
embedding of $G\times\C^*/H\times\{1\}$
containing a closed codimension-one orbit isomorphic to $ \tilde{X}_v^0\cong G/H_v.$
Choose a point $\tilde{z}'_v\in \tilde{Z}_v$ in this closed $G\times\C^*$-orbit 
of $\tilde{Z}_v$, 
and let $\tilde{H}'_v$ be its stabilizer in $G\times \C^*$. Up to $G$-conjugacy,
$\tilde{H}'_v$ is the image of $H'_v$ by the map
$$j\colon  H'_v \to G\times \C^*, \quad h \mapsto (h,\chi_v (h)).$$
As $G$-varieties, $G\times\C^*/\tilde{H}'_v$ is isomorphic to $G/H_v$,
where $G$ acts on $G\times \C^*$ by left multiplication on the left factor. 

By the Luna-Vust correspondence \cite[\S\S3.3, 7.5, 8.10]{LV} between 
embeddings of $G\times \C^*$ 
and colored fans 
in the valuation cone of $G\times \C^*/H\times\{1\}$, 
the uncolored fan of the blow-up $\tilde Y_v$ is obtained 
from that of $X_v \times\C$ by adding the half-line generated by 
$(v,0) + (0,1) =(v,1)$ 
in the lattice $N \oplus \Z$. 
Moreover, 
the ray generated by the lattice point 
$(v,1)$ corresponds to 
the elementary embedding of $G\times\C^*/H\times\{1\}$ 
with closed $G\times\C^*$-orbit $ \tilde{X}_v^0\cong G/H_v 
\cong G\times\C^*/\tilde{H}'_v$. 

According to \cite[\S3.6]{BP}, we get that 
$$\X^*(G\times\C^*/\tilde{H}'_v) =(M\oplus \Z) \cap (v,1)^\perp,$$
since $\X^*(G\times \C^*)\cong \X^*(B)\oplus \Z$.
The weight lattice of $G\times\C^*/\tilde{H}'_v$ as a $G$-variety is then
the image of $(M\oplus \Z) \cap (v,1)^\perp$ by the projection map
$$\X^*(B)\oplus \Z \to \X^*(B).$$
This image is nothing but $M$.
Therefore the weight lattice of $G/H_v$ is $M$
since $G/H_v\cong G\times\C^*/\tilde{H}'_v$ as a $G$-variety.

(2) 
It is easily seen (see the proof of \cite[Corollary 3.7]{BP}) that
$$\V(G\times \C^*/H\times\{1\}) \cong \V \oplus \Q(0,1).$$
On the other hand, by \cite[Theorem 3.6]{BP},
$\V(G\times \C^*/ \tilde{H}'_v) \cong \V(G/H_v)$ is the quotient
of $\V(G\times \C^*/H)$ by $\Q (v,1)$.
But the image of $\V\oplus \Q(0,1)$ by the isomorphism
$N_\Q \isomap (N_\Q+\Q(0,1)) /\Q(v,1)$
is $\V+\Q v$,
whence
$\V(G/H_v) \cong \V +\Q v.$ 
In particular,
$v \in \V^\circ$ if and only if $\V(G/H_v)=\V+\Q v= N_\Q$. 
Since the 
equality $\V(G/H_v)=N_\Q$ holds 
if and only if $H_v$ is horospherical by \cite[Corollary 6.2]{K}, 
the statement follows. 
\end{proof}

\begin{ex} \label{exa:1}
In the case where $H$ is horospherical, 
all Brion subgroups associated with $H$ are equal to $H$
up to $G$-conjugacy. 
This can be proved by describing all elementary 
embeddings of $G/H$ as induced from elementary embeddings 
of the torus $C/(C\cap H)$. 
\end{ex}

\section{Algebraic approach to Brion subgroups}  \label{sec:alg}
In this section, we investigate an algebraic approach to Brion subgroups 
and we furnish another proof of Proposition \ref{pro:sat} (1). 
Let us fix a primitive lattice point $v$ in $N \cap \V$.

Let $K:=\C(G/H)$ be the field of rational functions over $G/H$.
The field $K$ is a $B$-module 
and it is the quotient field
of the affine coordinate ring $A:=\C[U]$ of the open $B$-orbit $U \subset  G/H$. 
From the restriction of the valuation $v \,:\, K \to \Z$ to the subring $A \subset K$,  
we define the subring 
\[ A_v' := \{ f \in A \;|\;  v(f) \ge 0 \text{ or } f =0\}.\]
By the local structure theorem for toroidal 
embeddings (\cite[\S3.4]{BP} or \cite[Theorem 29.1]{T}), 
the ring $A_v'$ is isomorphic
to the coordinate ring of a $B$-invariant open subset $U' \subset X_v$ 
containing $U$. Note that $U'$ is the union of $U$ and of the open 
$B$-orbit in $X'_v$. Additionnally, $U'$ is isomorphic
to $(\C^*)^{r-1} \times {\C^{s+1}}$, with $r,s$ as in the introduction. 
Since $U'$ is an affine variety with a nonempty intersection 
with $X_v'$, the vanishing ideal $I_v$ 
in $A_v' =\C[U']$ of the divisor $X_v' \cap U'$  
is generated by one element $f_v$.  
One has
$$I_v:=\{f \in A \;|\; v(f) > 0 \text{ or }f=0\}.$$
Then we set for any $i\in\Z$,
$$I_v^{i}:=\{f\in A \;|\; v(f) \ge i\}.$$
We have  a decreasing filtration
$$\cdots
\supset I_v^{-j}\supset\cdots\supset I_v^{-2}\supset I_v^{-1}\supset A'_v  \supset I_v \supset
I_v^{2} \supset \cdots
\supset I_v^{i}\supset \cdots$$
such that:
$$A=\bigcup_{i\in\Z} I_v^{i}.$$
The group $B$ acts on this filtration, and each $I_v^{i}$ is a free $A'_v$-module of
rank one generated by $f_v^i$. We set
$$\widetilde{A_v'}:=\sum\limits_{i\ge 0} I_v^{i}/ I_v^{i+1}, 
\qquad \widetilde{A_v}:=\sum\limits_{i \in\Z } I_v^{i}/ I_v^{i+1}$$
so that $\widetilde{A_v'}= {\rm gr}\,A_v'$ and  $\widetilde{A_v}= {\rm gr}\,A$
with respect to the above filtrations.
Denote by $\overline{A_v'}$ the quotient ring $A'_v/I_v$, which 
is the affine coordinate ring of $X'_v \cap U'$. 
The rings $\widetilde{A_v'}$, $A_v'$ and $\overline{A_v'}$ are
naturally endowed with a $B$-action.
Note that $\widetilde{A_v'}$ is the affine coordinate ring of the normal bundle to the 
divisor $X_v \cap U'$ in $U'$. 
Moreover, $\widetilde{A_v'}$ is the affine 
coordinate of a $B$-invariant open subset $\widetilde{U}'$ in $\tilde{X}_v$ 
which has a nonempty 
intersection with the closed $G$-orbit in $\tilde{X}_v$. 
As for 
$\widetilde{A_v}$, it is
the affine coordinate ring of the open $B$-orbit $\widetilde{U}$ in
$\widetilde{U}'$.

\begin{prop} \label{pro:alg}
There is a natural $B$-equivariant isomorphism between the rings 
$\widetilde{A_v}$ and $A$ which induces an isomorphism between
the groups of $B$-eigenfunctions in $\widetilde{A_v}$ and in $A$.
In particular, the
lattices of weights of the spherical homogeneous spaces 
 $G/H$ and $G/H_v$ are the same.  
\end{prop}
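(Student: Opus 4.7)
The plan is to exploit the fact that $A = \C[U]$ is multiplicity-free as a $B$-module, which is a reformulation of the spherical property: since $U = B/(B \cap H)$ and all irreducible rational $B$-representations are one-dimensional, we have a canonical decomposition
\[
A \;=\; \bigoplus_{\lambda \in \Lambda} \C f_\lambda,
\]
where $\Lambda \subseteq \X^*(B)$ is the set of $B$-weights occurring in $A$ and each $f_\lambda$ is a $B$-eigenfunction of weight $\lambda$, uniquely determined up to a nonzero scalar.

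First I would observe that the filtration $\{I_v^i\}$ is $B$-stable, because $v$ is $G$-invariant and hence in particular $B$-invariant. Combined with multiplicity-freeness this forces
\[
I_v^i \;=\; \bigoplus_{\lambda \in \Lambda,\; v(f_\lambda) \ge i} \C f_\lambda,
\qquad\text{so}\qquad
I_v^i/I_v^{i+1} \;\cong\; \bigoplus_{\lambda\,:\, v(f_\lambda)=i} \C\,[f_\lambda]
\]
as $B$-modules. Summing over $i \in \Z$ gives a canonical $B$-equivariant linear bijection $\phi\colon A \to \widetilde{A_v}$, $f_\lambda \mapsto [f_\lambda]$. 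To upgrade $\phi$ to a ring isomorphism, note that for $\lambda, \mu \in \Lambda$ the product $f_\lambda f_\mu$ is a $B$-eigenfunction of weight $\lambda + \mu$, hence a nonzero scalar multiple of $f_{\lambda+\mu}$; additivity of $v$ on products then gives $v(f_{\lambda+\mu}) = v(f_\lambda) + v(f_\mu)$, so that the induced multiplication in the associated graded yields $[f_\lambda]\cdot[f_\mu] = \phi(f_\lambda f_\mu)$, as required.

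The second assertion is then immediate: the ring isomorphism $\phi$ sends units to units and preserves $B$-weights, so the invertible $B$-eigenfunctions in $A$ correspond bijectively to those in $\widetilde{A_v}$. The former span, over $\C$, the weight lattice $\X^*(G/H) = M$ of the open $B$-orbit $U \subset G/H$, while the latter span the weight lattice $\X^*(G/H_v)$ of the open $B$-orbit $\tilde{U} \subset G/H_v$ (since $\widetilde{A_v} = \C[\tilde{U}]$). Hence the two weight lattices coincide as sublattices of $\X^*(B)$, giving an algebraic proof of Proposition~\ref{pro:sat}(1) that avoids the deformation-to-the-normal-bundle construction.

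The main delicate point is the eigenspace decomposition $I_v^i = \bigoplus_{v(f_\lambda) \ge i} \C f_\lambda$: it rests on the conjunction of $B$-stability of the filtration with multiplicity-freeness of $A$, and this is precisely where the spherical hypothesis enters in an essential way.
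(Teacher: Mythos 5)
The foundation of your argument --- the direct sum decomposition $A=\bigoplus_{\lambda\in\Lambda}\C f_\lambda$ of $A=\C[U]$ into $B$-eigenlines --- is false in general, and this is a genuine gap. The group $B$ is connected solvable but not reductive, so its rational representations are not semisimple: they admit filtrations with one-dimensional quotients (Lie--Kolchin), but not direct sum decompositions into characters. Concretely, $U\cong(\C^*)^r\times\C^s$ with $s=\dim P_u$, and any $B$-eigenfunction is necessarily invariant under the unipotent radical of $B$ (a unipotent group has no nontrivial characters), so by the local structure theorem the $B$-eigenfunctions span only the subalgebra of $P_u$-invariants, isomorphic to $\C[(\C^*)^r]$; as soon as $s>0$ this is a proper subspace of $A$. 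For instance, for $G=SL(2)$ and $H$ a maximal torus in general position with respect to $B$ (Example \ref{exa:arc}), one has $B\cap H=\{\pm E\}$, $U\cong\C^*\times\C$ and $A=\C[x^{\pm1},y]$, where $y$ lies in no $B$-stable line. Sphericity gives that each eigenweight occurs with multiplicity at most one --- which you correctly invoke --- but it does not give that the eigenfunctions span $A$. Consequently your map $\phi$ is only defined on a proper subspace, the identity $I_v^i=\bigoplus_{v(f_\lambda)\ge i}\C f_\lambda$ fails, and the claimed ring isomorphism is not constructed.

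The part of your argument that survives is precisely the part the paper actually uses: the $f_m$ ($m\in M$) are units of $A$, the valuation $v$ is additive on products of eigenfunctions, and their classes in the graded pieces $I_v^i/I_v^{i+1}$ multiply correctly; this yields a weight-preserving injection of the span $\hat{M}=\bigoplus_{m\in M}\C f_m$ into the $B$-eigenvectors of $\widetilde{A_v}$. What is then still missing --- and what you dismiss in one line by appealing to the nonexistent ring isomorphism --- is surjectivity: why is every $B$-eigenfunction of $\widetilde{A_v}$, equivalently every element of the weight lattice of $G/H_v$, in the image? The paper supplies a separate argument: $\widetilde{A_v}$ carries a $\Z$-grading (a $\C^*$-action commuting with the $G$-action), so every $B$-eigenvector there is homogeneous and hence lies in the image of one of the projections $\psi_i\colon I_v^i\cap\hat{M}\to I_v^i/I_v^{i+1}$. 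Your proposal needs both repairs: restrict the construction to $\hat{M}$ from the outset, and add the homogeneity argument for surjectivity.
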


\begin{proof} 
Given any $m \in \X^*(B)$, the dimension of 
the set $K_m^{(B)}$ 
of $B$-eigenvectors of $K$ associated with $m$  
is at most one for spherical $H$. 
Choose for each $m \in M$ a generator $f_m$ of $K_m^{(B)}$ 
and set 
$$\hat{M}:=\sum\limits_{m \in M} \C  f_m.$$
Then $\hat{M}$ is a $\C$-vector subspace of $K$. 
Similarly, let $K_v:=\C(G/H_v)$,  
let 
$M_v$ be the weight lattice of $G/H_v$ and set 
$$\hat{M}_v: =\sum\limits_{m \in M_v } \C  f_{v,m},$$
where for each $m \in M_v$, $f_{v,m}$ is a generator of 
the set 
of $B$-eigenvectors of $K_v$ associated with $m$. 
Let $T \in I_v/I_v^2$ be the class of the generator $f_v \in I_v$. Then we have
\begin{align*}
\widetilde {A_v'} \cong \overline{A_v'}[T]\cong \overline{A_v'} \oplus 
I_v/I_v^2\oplus \cdots\oplus I_v^i/ I_v^{i+1}\oplus\cdots
\qquad \qquad \\
 \subset  \overline{A_v'}[T,T^{-1}]\cong \sum\limits_{i\in\Z}I_v^{i}/ I_v^{i+1} = \widetilde{A_v}.
\end{align*}
For each $i\in\Z$, write $\mathscr{M}_v^{i}$ 
for the image of the projection map
$I_v^{i}\cap \hat{M} \longrightarrow I_v^{i}/I_v^{i+1},$ 
and set 
$$\mathscr{M}_v:= \sum\limits_{i\in \Z}\mathscr{M}_v^{i}.$$

Let us first show that $\mathscr{M}_v=\hat{M}_v$. 
Observing that the intersection $I_v^{i}\cap \hat{M}$ 
is the direct sum of the lines generated by $f_m$, 
where $m \in M$ and $\langle m,v\rangle \ge i$, 
we deduce that $\mathscr{M}_v^{i}$ 
is the direct sum of the lines generated by $f_m$, 
where $m \in M$ and $\langle m,v\rangle = i$.  
By construction, $\mathscr{M}_v$
is contained in $K_v$ and consists of $B$-eigenvectors,
whence a first inclusion 
$\mathscr{M}_v \subset  \hat{M}_v.$ 

To establish the converse inclusion, let us
prove that for each $m\in M_v$, we have {$f_{v,m} \in\mathscr{M}_v$}.
Note that $K_v$ admits a $\C^*$-action induced from the $\Z_{\ge 0}$-grading
on $A_v$ which commutes with the $G$-action.
Then for every $m \in M_v$, the $B$-eigenvector $f_{v,m}$ is also
an eigenvector for the $\C^*$-action, which means that $f_{v,m}$
is homogeneous with respect to the $\Z$-grading on $K_v$.
Hence, $f_{v,m}$ is in 
$\mathscr{M}_v^{i}$ for some $i$.

In conclusion,
$$\hat{M}_v =\mathscr{M}_v \cong \sum\limits_{i\in \Z}
\sum\limits_{m \in M,\atop
\langle m,v\rangle = i} \C f_m \cong \hat{M}.$$
So the weight lattice in $K_v$ is $M$. 
Because the above isomorphisms are $B$-equivariant, the proposition
follows. 
\end{proof}

\begin{rem}
The last proposition can also be deduced from the local structure theorem:
$\Spec A \cong P^{u} \times Z$,
where $Z$ is an elementary embedding of $C/C \cap H$.
\end{rem}

\section{Homogeneous spherical data associated with satellites} \label{sec:Luna}
The idea of classifying spherical
homogeneous spaces $G/H$ in combinatorial terms was proposed by Luna in 2001 \cite{Luna01}. 
For this purpose, Luna invented in \cite[Sect.~3-4]{Luna01} the {\em spherical systems} and 
{\em Luna diagrams}, and conjectured that these combinatorial data can be used to 
classify spherical homogeneous spaces. 
This classification is carried out in two steps. The first one is to reduce the classification to the case 
of {\em wonderful subgroups}, that is, the subgroups $H\subset G$ 
such that $G/H$
admits a wonderful completion. The second
step  is to describe all the wonderful $G$-varieties. 

Luna's conjecture was recently solved due to the 
long efforts of several
researchers. The uniqueness part of Luna's conjecture was proved by Losev 
in 2009 \cite{Losev2009}. 
The existence part was recently completed in a series of papers 
by Bravi and Pezzini \cite{Bravi-Pezzini2014,Bravi-Pezzini2015,Bravi-Pezzini2016}. 
Another proof of the existence part, with different methods, has been proposed by
Cupit-Foutou \cite{Cupit-Foutou}.
We refer the reader to the introduction of \cite{Avdeev2015}, 
and the references given there, for more about this topic. 

In this section we describe the homogeneous spherical data associated 
with Brion subgroups 
in order to prove Theorem \ref{Th:inv-intro}. 
We start by recalling the definition and basic properties 
of the homogeneous spherical datum 
associated with the spherical
homogeneous space $G/H$. 

The Borel subgroup $B$ acts on $G/H$ by left multiplication: 
$(b,gH) \mapsto (bgH)$, 
and the spherical subgroup $H$ acts on the generalized flag variety $B\backslash G$ 
by $(h,Bg) \mapsto Bgh^{-1}$. 
Then we have a natural bijection between the finite cosets 
$B\backslash (G / H)$ and $(B\backslash G) / H$ 
which induces  
a natural bijection $D_i \leftrightarrow D_i'$  
sending $D_i$ to $D'_i:=B \backslash D_i H$ 
between the set  ${\mathscr D}$ of $B$-invariant divisors in $G/H$ and the set
${\mathscr D'}:=\{D_1', \ldots, D_m'\}$ of $H$-invariant divisors
in $B\backslash  G$. 
The Picard group of the generalized flag variety 
$B\backslash G$ is a free group whose base consists of the classes of
line bundles $L_1, \ldots , L_n$
such that the space of global sections
$H^0( B\backslash G , L_j)$ is the $j$-th fundamental representation
(with the highest weight $\varpi_j$) of the semi-simple part of the 
Lie algebra of $G$ 
$(1 \le j \le n)$.  Therefore we can associate with every color
$D_i \subset G/H$
a nonnegative linear combination of fundamental weights:
$\sum_{j=1}^n  a_{ij} \varpi_j = [D_i'] \in {\rm Pic}(B\backslash G)$. 
Let $A := (a_{ij})_{1\le i \le m \atop 1 \le j\le n}$ and let $C:=(c_1, \dots, c_n)$ be the
sum of rows of $A$, i.e., $c_j = \sum_{i=1}^m a_{ij}$ and
\[ \sum_{i=1}^m [D_i'] = \sum_{j=1}^n c_j \varpi_j.\]
It is known (see e.g.~\cite[Lemma 6.4.2]{Luna01} or \cite[Lemma 30.24]{T}) 
that all entries $c_j$ of the vector $C$
belong to the set $\{ 0,1,2 \}$
(so the same statement holds for all 
entries of the matrix~$A$). The matrix $A$ satisfies certain
stronger additional conditions that are used in the classification of colors 
(\cite[Sect.~2 and 3]{Luna97}, \cite[Sect.~1 and 2]{Luna01}). 
Fix $i \in \{1,\ldots,m\}$ and let $J(i)$ be the set of $j\in \{1,\ldots,n\}$ 
such that $a_{ij}\not=0$. We have:
\begin{enumerate}
\item
if $a_{ij}=2$ for some $j$, then
$a_{il} = 0$ for all $l \neq j$, i.e., $[D_i'] = 2 \varpi_j$.
In this case the $B$-invariant divisor $D_i \subset G/H$ is said
to be {\em of type $2a$}, 
\item
if the color $D_i$ is not of type $2a$, then 
\[ [D_i'] = \sum_{j \in J(i)} \varpi_j,\]
and either $c_{j} = 2$ for all $j \in J(i)$ (and the color $D_i$ is called {\em of type $a$}),
or $c_{j} = 1$ for all $j \in J(i)$  (and the color $D_i$ is called {\em of type $b$}).
\end{enumerate}
Denote by ${\mathscr D}^{2 a}$, ${\mathscr D}^a$ and ${\mathscr D}^b$
the set of colors of type $2a$, type $a$ and type $b$, respectively. 

Let us consider two maps
\[  \delta \colon  {\mathscr D} \to {\rm Pic}(B\backslash  G),\qquad \rho \colon
{\mathscr D} \to N, \]
where $\delta(D_i):= [D_i'] \in {\rm Pic}(B\backslash  G)$ $(1 \le i \le m)$
and $\rho(D_i)$ is the lattice point  in $N$ corresponding to
the restriction of the divisorial valuation $\nu_D$ of the field $\C(G/H)$ to the
group of $B$-semiinvariants in $\C(G/H)$. 

The following statement is the uniqueness part of Luna's program:  

\begin{thm}[{\cite[Theorem 1]{Losev2009}}] The triple $(M, \Sigma, {\mathscr D})$ 
consisting of the lattice of weights $M \subset 
{\mathscr X}^*(B)$, the set of spherical roots
$\Sigma \subset M$ and  the set ${\mathscr D}$ of all $B$-invariant
divisors in
$G/H$ together with the two maps $\delta \colon
{\mathscr D} \to {\rm Pic}(B\setminus G)$ and
$\rho \, : \, {\mathscr D} \to N$ uniquely
determines the spherical
subgroup $H \subset  G$ up to conjugation.
\end{thm}

Using the natural bijection between the set ${S}=\{\alpha_1,\ldots,\alpha_n\}$ 
of simple roots of $G$ 
and the set of fundamental weights $\alpha_j \leftrightarrow \varpi_j$
we can regard the set $J(i)$ 
as a subset in $S$. The  set $ {\mathscr D}^a$ of
colors of type $a$ can be characterized 
as the set of those $B$-invariant divisors
$D_i \in {\mathscr D}$ for which the set $J(i)$ contains a spherical root,
i.e., $J(i) \cap
\Sigma \neq \varnothing $. Set $S^p := \{ \alpha \in S\; | \; c_\alpha = 0 \}$,
that is, $S^p$ consists
of those simple roots $\alpha \in S$ such that  the corresponding fundamental
weight is not a summand
of $\delta(D_i)$ for all colors $D_i \in  {\mathscr D}$.
By \cite[\S2.3]{Luna01}, the triple $(M, \Sigma, {\mathscr D})$ 
is uniquely determined by
the quadruple $(M, \Sigma, S^p, {\mathscr D}^a)$, 
where the set ${\mathscr D}^a$ of colors of type
$a$ is equipped with only one map $\rho^{a} \colon {\mathscr D}^a \to N$ 
which is the restriction to ${\mathscr D}^a$ of $\rho$.
The quadruple $(M, \Sigma, S^p, {\mathscr D}^a)$ 
is called the {\em homogeneous spherical datum}
associated with the spherical homogeneous space $G/H$. 

By the Luna general classification, 
the map $G/H \mapsto (M, \Sigma, S^p, {\mathscr D}^a)$ 
is a bijection between spherical
homogeneous spaces of $G$ (up to $G$-equivariant isomorphism) and
homogeneous spherical data for $G$. 

The following theorem has been proved by Gagliardi and 
Hofscheier.

\begin{thm}[{\cite[Theorem~1.1]{GH}}] 
Let $X$  
be a simple toroidal spherical embedding of $G/H$ 
corresponding to an uncolored 
cone $\sigma$. 
Then the homogeneous spherical
datum of the unique closed $G$-orbit in $X$ 
(it is a spherical homogeneous space) is the quadruple
\[ (M_0,  \Sigma_0, S^p,
{\mathscr D}_0^a), \] with
 \[ M_0:= M \cap {\sigma}^{\perp}, \quad \Sigma_0 := \Sigma
 \cap {\sigma}^{\perp},   \quad
 {\mathscr D}_0^a = \{ D_i \in {\mathscr D}^a \;
 | \; J(i) \cap  \Sigma_0 \neq \varnothing \},\]
 where the map $\rho_0 \colon {\mathscr D}_0^a \to N/\langle   {\sigma}
 \rangle$ is 
 the restriction to $ {\mathscr D}_0^a  $ of the map 
 \hbox{$\rho\colon {\mathscr D}^a \to N$} composed
 with the natural homomorphism $N \to  N/\langle   {\sigma} \rangle$.
\end{thm}

\begin{cor} \label{colors-data}
Let $v \in N \cap \V$ be a nonzero lattice point in the valuation
cone  of $G/H$. 
Then the homogeneous spherical
datum of the closed divisorial $G$-orbit $X_v'$ in 
the elementary spherical embedding $X_v$ of $G/H$ corresponding to $v$
is the quadruple
\[ (M_0,  \Sigma_0, S^p,
{\mathscr D}_0^a), \] with
 \[ M_0:= M \cap v^{\perp}, \quad \Sigma_0 := \Sigma \cap v^{\perp},  \quad
 {\mathscr D}_0^a = \{ D_i \in {\mathscr D}^a \; | \; J(i) \cap  \Sigma_0 \neq \emptyset \},\]
  where the map $\rho_0\colon {\mathscr D}_0^a \to N/\langle  v \rangle$ is 
 the restriction to $ {\mathscr D}_0^a  $ of the map  $\rho^{a} \colon {\mathscr D}^a \to N$
 composed
 with the natural homomorphism $N \to  N/\langle v \rangle$.
\end{cor}

We now compare the homogeneous spherical datum     
of the spherical homogeneous space $G/H$ to that 
of the spherical homogeneous
space $G/H_v$ corresponding to the Brion
subgroup $H_v \subset G$.

\begin{thm} \label{Th:inv-data}
Let $v \in N \cap \V$ be a nonzero lattice point
in the valuation cone  of $G/H$. 
Then the homogeneous spherical datum of
the spherical homogeneous space $G/H_v$ is the quadruple
\[ (M, \Sigma \cap v^\perp, S^p, {\mathscr D}^a_v), \]
where ${\mathscr D}_v^a = \{ D_i \in {\mathscr D}^a \; | \; J(i)
\cap  (\Sigma \cap v^\perp) \neq \varnothing  \}$ and the map
$\rho_v^{a} \colon{\mathscr D}_v^a \to N$ is the restriction of $\rho^{a} \colon
{\mathscr D}^a \to N$ to
the subset ${\mathscr D}_v^a \subset {\mathscr D}^a$.
In particular, the $G$-variety $G/H_v$ (and hence the conjugacy class of the Brion subgroup 
$H_v$) only depends on the minimal face of the valuation 
cone of $G/H$ containing the lattice point $v$.
\end{thm}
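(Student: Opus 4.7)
The plan is to apply Corollary \ref{colors-data} twice, exploiting the fact that the two elementary embeddings $X_v$ of $G/H$ and $\tilde X_v$ of $G/H_v$ share a common closed $G$-orbit $X'_v$, whose homogeneous spherical datum can then be computed in two ways and compared.

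The equality $\X^*(G/H_v)=M$ is exactly Proposition \ref{pro:sat}(1). By Proposition \ref{pro:sat}(2) one has $\V(G/H_v)=\V+\Q v$; its defining half-space normals are precisely those $s_i\in\mathcal{S}$ satisfying $\langle s_i,v\rangle=0$, so $\mathcal{S}(G/H_v)=\mathcal{S}\cap v^\perp$. In particular $v$ lies in the lineality of $\V(G/H_v)$, so the colorless one-dimensional cone $\Q_{\ge 0}\,v\subset N_\Q$ defines an elementary spherical embedding of $G/H_v$ whose closed $G$-orbit is canonically identified with $X'_v$; by construction this embedding coincides with $\tilde X_v$.

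Corollary \ref{colors-data} applied to $X_v$ gives the spherical datum of $X'_v$ as
\[
\bigl(M\cap v^\perp,\;\mathcal{S}\cap v^\perp,\;S^p,\;\{D_i\in\mathcal{D}^a : J(i)\cap(\mathcal{S}\cap v^\perp)\ne\varnothing\}\bigr),
\]
with $\rho$-map into $N/\langle v\rangle$ induced by $\rho$. Applying the same corollary to $\tilde X_v$ produces the same datum rewritten as
\[
\bigl(M\cap v^\perp,\;\mathcal{S}(G/H_v),\;S^p(G/H_v),\;\mathcal{D}^a(G/H_v)\bigr),
\]
since every type-$a$ color of $G/H_v$ automatically satisfies the defining condition by definition of type $a$. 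Losev's uniqueness theorem (quoted just before Corollary \ref{colors-data}) forces the two quadruples together with their $\rho_0$-maps to coincide, yielding $S^p(G/H_v)=S^p$ and a bijective identification of $\mathcal{D}^a(G/H_v)$ with the subset $\{D_i\in\mathcal{D}^a : J(i)\cap(\mathcal{S}\cap v^\perp)\ne\varnothing\}$ of $\mathcal{D}^a$, under which $\rho_v$ and $\rho$ agree modulo $\langle v\rangle$.

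To upgrade the agreement of $\rho_v$ with $\rho$ from $N/\langle v\rangle$ to $N$, I would invoke Proposition \ref{pro:alg}: the $B$-equivariant isomorphism $\widetilde{A_v}\cong A$ and the ensuing identification $\hat M_v\cong \hat M$ of $B$-eigenfunction spaces permit one to compute the divisorial valuation of an identified color on the $M$-graded eigenfunctions directly inside the associated graded of the $v$-adic filtration on $\C(G/H)$. Combined with the observation that the colors being identified arise from a common $B$-invariant divisor in the degeneration $\tilde Z_v$ used in the proof of Proposition \ref{pro:sat}, flatness of this deformation forces the pairings with $M$ to coincide on the nose, so $\rho_v(D)=\rho(D)$ in $N$. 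The final assertion then follows from Losev's theorem, since the resulting homogeneous spherical datum of $G/H_v$ depends on $v$ only through the set $I(v)=\{s_i\in\mathcal{S} : \langle s_i,v\rangle=0\}$, which in turn determines the minimal face of $\V$ containing $v$. The main obstacle is precisely this last $\rho$-lifting step, because Corollary \ref{colors-data} by itself only recovers $\rho_v$ modulo $\langle v\rangle$.
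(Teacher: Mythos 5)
Your reduction of the statement to Corollary \ref{colors-data} is natural, and the parts of your argument concerning $M$, $\mathcal{S}\cap v^\perp$, $S^p$ and the underlying set $\mathcal{D}^a_v$ do go through (for $S^p$ and $\mathcal{D}^a_v$ you do not even need Losev's theorem at that stage: the homogeneous spherical datum of $X_v'$ is an invariant of the $G$-variety $X_v'$, so your two computations of it must agree). But the step you yourself flag as "the main obstacle" is a genuine gap, and it is exactly where the content of the theorem lies. Comparing the two elementary embeddings $X_v$ of $G/H$ and $\tilde X_v$ of $G/H_v$ through their common closed orbit $X_v'$ can only ever determine $\rho_v$ modulo $\langle v\rangle$, since Corollary \ref{colors-data} reports the $\rho$-map of the closed orbit with values in $N/\langle v\rangle$ from both sides. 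Your proposed repair --- invoking Proposition \ref{pro:alg} and "flatness of the deformation" to force the pairings to agree "on the nose" --- is not an argument: Proposition \ref{pro:alg} identifies the $M$-graded spaces of $B$-eigenfunctions, but says nothing about the order of vanishing of the eigenfunctions $f_{v,m}$ along a color of $G/H_v$ versus that of $f_m$ along the corresponding color of $G/H$, which is precisely what $\rho_v(D)=\rho(D)$ in $N$ asserts. To make the degeneration argument precise you would have to track the $B$-invariant divisors through the family $\tilde Z_v\to\C$, and carrying that out correctly essentially reproduces the paper's proof.

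The paper's device avoids the quotient by $\langle v\rangle$ altogether: it applies Corollary \ref{colors-data} not to $X_v$ but to the elementary embedding of $G/H\times\C^*$ (with its $G\times\C^*$-action) attached to the primitive vector $(v,1)\in N\oplus\Z$, whose closed orbit is $(G\times\C^*)/\tilde H_v'\cong G/H_v$ by the construction in the proof of Proposition \ref{pro:sat}. The colors of $G/H\times\C^*$ are the $D_i\times\C^*$ with $\rho$-value $(\rho(D_i),0)$, and the composite $N\hookrightarrow N\oplus\Z\to(N\oplus\Z)/\langle(v,1)\rangle$ is an isomorphism because the last coordinate of $(v,1)$ is $1$; dually, $(M\oplus\Z)\cap(v,1)^\perp$ projects isomorphically onto $M$. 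Hence the corollary delivers the $\rho$-values of the $a$-colors of $G/H_v$ directly as elements of $N$, equal to the $\rho(D_i)$, with no information lost. If you want to keep your two-sided comparison, you should replace the pair $(X_v,\tilde X_v)$ by this single $G\times\C^*$-equivariant embedding; otherwise the lifting from $N/\langle v\rangle$ to $N$ remains unproved.
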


\begin{proof}
We keep the notations of the proof of
Proposition~\ref{pro:sat}. 
Thus $X'_v$ is the spherical homogeneous space $G/H'_v$, 
where $H_v'$ is the stabilizer of a point in $X'_v$,  
and $H_v$ is contained in $H'_v$. 
Let us consider the spherical homogeneous space $G /H  \times \C^*$ 
together with the $G \times \C^*$-action. Its
lattice of weights is equal to $M\oplus \Z$. 
Then $G/H_v \cong (G \times \C^*)/H_v'$ is the closed $G \times \C^*$-orbit 
in the elementary spherical embedding of
$G/H \times \C^*$ corresponding
to the primitive lattice vector $(v, 1) \in N \oplus \Z$. 
By the definition of colors as 
$B$-stable prime divisors, 
we get a natural
bijection between the set of colors in $G/H \times \C^*$ and those in $G/H$, 
as well as a natural bijection between the set of colors in $G/H_v'$ and 
those in $(G \times \C^*)/H_v'$ that both preserve the type of colors. On the other hand, by 
Corollary~\ref{colors-data}, we have a natural bijection between the set of
$a$-colors $D_i \times \C^*$ in $G/H \times \C^*$ such that
$J(i) \cap (\Sigma 
\cap v^\perp) \neq \varnothing$ and the set
of $a$-colors in $G/H_v \cong (G \times \C^*)/H_v'$. 
Since the composition of the natural embedding $N \hookrightarrow N \oplus \Z$ 
and the epimorphism $ N \oplus \Z \to
(N \oplus \Z)/\langle (v,1) \rangle \cong N$ is the identity
map on $N$ we obtain that the $\rho_v^{a}$-images in $N$ of the
$a$-colors in $G/H_v$ and the $\rho^{a}$-images of the
$a$-colors $D_i$ in $G/H$ such that $J(i) \cap 
(\Sigma  \cap v^\perp) \neq \emptyset$ are the same, i.e., the map
$\rho_v ^{a}\colon{\mathscr D}_v^a \to N$ is the restriction of $\rho^{a} \colon
{\mathscr D}^a \to N$ to
the subset ${\mathscr D}_v^a \subset {\mathscr D}^a$.
\end{proof}

According to Theorem \ref{Th:inv-data}, the following definition is legitimate:

\begin{defi} \label{def2:sat}
Given a subset $I $ of $\Sigma$, the {\em spherical satellite $H_I$ 
of $H$ associated with $I$} (or {\em with the face $\V_I$}) is 
the spherical subgroup $H_v$, where $v$ is any point in the
interior of
$\V_I=\{n \in N_\Q  \; |\; \langle s_i ,n\rangle =0, \text{ for all }s_i \in I\}$.
Then $H_I$ is well-defined up to $G$-conjugation. 
\end{defi}

The spherical satellite $H_\Sigma$
corresponding to the minimal face of $\V$ is $G$-conjugate to $H$.
On the opposite side, there is a unique, up to a $G$-conjugation, horospherical satellite
$H_\varnothing$ which corresponds to the whole cone $\V$
(cf.~Proposition \ref{pro:sat} (2)).
Recall that the valuation cone $\V$ is cosimplicial
and that it has $2^k$ faces.
Consequently, $H$ has exactly $2^k$ spherical satellites.

\begin{rem} 
When $H_v$ is horospherical, that is, $v \in N\cap\V^\circ$,  
Theorem \ref{Th:inv-data} can 
be easily proved by the arguments of Example~\ref{exa:1}.
\end{rem}

As a consequence of Theorem \ref{Th:inv-data} 
we derive the following description of the homogeneous spherical 
datum of the spherical homogeneous space $G/H_I$
corresponding to a satellite $H_I$. 

\begin{cor} 
Given a subset $I $ of $\Sigma$, 
the homogeneous spherical datum of
the spherical homogeneous space $G/H_I$ is the quadruple
\[ (M, I, S^p, {\mathscr D}^a_I), \]
where ${\mathscr D}_I^a = \{ D_i \in {\mathscr D}^a \; | \; J(i)
\cap  I \neq \varnothing  \}$ and the map
$\rho_I^{a} \colon{\mathscr D}_I^a \to N$ is the restriction of $\rho^{a} \colon
{\mathscr D}^a \to N$ to
the subset ${\mathscr D}_I^a \subset {\mathscr D}^a$.
\end{cor}

\section{Normalizers of satellites and consequences} \label{sec:norm} 
Let $N_G(H)$ be the normalizer of $H$ in $G$. 
The homogeneous space $G/N_G(H)$ is spherical
and $N_G(H)$ is of finite index in 
$N_G(N_G(H))$\footnote{\label{foot:counter_example} In \cite[\S5]{BP} or~\cite[Lemma 30.2]{T}, 
it is claimed that $N_G(N_G(H))=N_G(H)$ but 
%, as Guido Pezzini informed us, 
it is not true in general. 
The simplest counter-example is the following, due to Avdeev. 
Let $G=GL_2$ and let $H$ be the set of diagonal matrices whose 
second entry is 1. Then $N_G(H)$ is the set of diagonal 
matrices, which is not self-normalizing (see \cite[Example 4]{Avdeev2013} 
for another counter-example with $G=SL_3$).}. 
Therefore its valuation cone $\widehat{\V}:=\V(G/N_G(H))$ is strictly convex 
by \cite[Corollary 5.3]{BP}.
Namely, we have $\widehat{\V}=\V/ (\V \cap -\V)$
(see e.g.~the proof of \cite[Theorem 29.1]{T}).
{Let $\widehat{X}$ be the unique, up to a $G$-equivariant isomorphism,
complete simple toroidal embedding of $G/N_G(H)$. 
The corresponding uncolored cone of $N_\Q$ is $\widehat{\V}$. 
The $G$-orbits of $\widehat{X}$ are in bijection with the
faces of $\widehat{\V}$.
Thus we have a natural bijection $I \leftrightarrow \widehat{X}_I$ 
between the subsets $I \subset \Sigma$
and the $G$-orbits $\widehat{X}_I \subset \widehat{X}$ 
such that $\widehat{X}_\Sigma \cong G/N_G(H)$. 

\begin{prop} \label{Pro:norm} 
Let $I$ be a subset of $\Sigma$. 
The normalizer of the stabilizer of any point 
in the $G$-orbit $\widehat{X}_I$ is equal to the normalizer 
$N_G(H_I)$ of the satellite~$H_I$. 
\end{prop}

\begin{proof}
Pick a nonzero point $v$ in the
interior $\V_I^\circ$ of $\V_I$. 
According to \cite[Theorem 15.10]{T}, the canonical map
$G/H\to G/N_G(H)$ extends to
a $G$-equivariant map 
$\pi \colon X_v\to \widehat{X}.$ 
Choose a point $x'_v$ in $X'_v$ 
and set $\widehat{x}_v:=\pi(x'_v).$ 
Then $\widehat{x}_v$ belongs to the $G$-orbit
of $\widehat{X}$ corresponding to the face $\widehat{\V}_I$ of $\widehat{\V}$, 
where $\widehat{\V}_I$  
is the image of $\V_I$  
by the projection map
$N_\Q\to N_\Q/(\V \cap -\V)$. 
Furthermore, since $\pi$ is $G$-equivariant, the stabilizer $H'_v$ of $x'_v$ 
in $G$ 
is contained in the stabilizer $G_{\widehat{x}_v}$ of $\widehat{x}_v$ in $G$.
Hence we have the following inclusions of spherical subgroups:
\begin{align}\label{eq:face}
H_I=H_v \subset   H'_v \subset  G_{\widehat{x}_v}.
\end{align}

Let us show that $G_{\widehat{x}_v}$ is contained in $N_G(H'_v)$.
We first observe that
the fiber of $\pi$ at $x'_v$ is
\begin{align}\label{eq2:face}
\pi^{-1}(\pi(x'_v))= (G_{\widehat{x}_v} ).x'_v \cong
G_{{\widehat{x}_v}}/H'_v.
\end{align}
Since $BN_G(H)=BH$ by \cite[\S5]{BP}, 
the set $BN_G(H)$ is dense in $G$. 
In addition, the parabolic subgroup 
$P$ is the set of $s\in G$ such that $sBN_G(H)=BN_G(H)$. 
Fix an $H$-adapted Levi
subgroup $L$ of $P$, cf.~Definition \ref{def:adapted}.  

Let $\D$ and $\widehat{\D}$ be the set of colors of $G/H$
and $G/N_G(H)$, respectively.
Set
$${X}_v^\circ:=X_v \setminus\bigcup_{D\in\D} \overline{D},
\qquad {\widehat{X}^\circ}:=\widehat{X} \setminus
\bigcup_{D\in\widehat{\D}} \overline{D},$$
and apply the local structure theorem 
for toroidal embddings (cf.~\cite[\S29.1]{T})  to $\widehat{X}$.
There is a closed 
$L$-stable subvariety $\widehat{Z}$ of ${\widehat{X}}^\circ$
on which $(L,L)$ acts trivially and such that the map
$$P^{u} \times \widehat{Z} \longrightarrow {\widehat{X}^\circ}$$
is an isomorphism. Moreover, $\widehat{Z}$ is an embedding 
of the torus $C/C\cap H$ and every $G$-orbits of $\widehat{X}$ meets 
$\widehat{Z}$ along $L$-orbits. 
Here $C$ is the neutral component of the center of $L$. 
Note that $C \cong L/(L,L)$. 

Since ${X}_v^\circ$ is the preimage of ${\widehat{X}^\circ}$
by $\pi$, the local structure for $X_v$ holds with $Z:=\pi^{-1}(\widehat{Z})$
as an $L$-stable variety. Every $G$-orbit of $X_v$ (resp.~$\widehat{X}$)
meets $Z$ (resp.~$\widehat{Z}$) and, hence, 
we may assume that $x'_v\in Z$ and $\widehat{x}_v\in\widehat{Z}$.

Using the isomorphisms ${{X}_v^\circ}\cong P^{u} \times {Z}$   
and ${\widehat{X}^\circ}\cong P^{u} \times \widehat{Z}$, 
we identify $x_v$ with $(1,x_v)$, ${\widehat{x}_v}$ with $(1,{\widehat{x}_v})$, 
and  
the restriction of $\pi$ to ${X}_v^\circ$ with the map 
$$P^{u} \times Z \longrightarrow P^{u}\times \widehat{Z}, 
\quad (p,y) \longmapsto (p,\pi(y)).$$ 
The closed subvarieties 
$Z$ and $\widehat{Z}$ are pointwise fixed by $(L,L)$. 
Consequently, 
$\pi^{-1}(\pi(x'_v)) \cap {X_v^\circ}$ coincides through the above identifications 
with the algebraic subtorus 
$$S:=(C_{{\widehat{x}_v}}).x'_v \cong 
C_{{\widehat{x}_v}}/C_{x'_v}$$ 
of $C$ which 
is contained in the closed subvariety $Z$. 
The Zariski closure $\overline{S}$ of $S$ in $X_v$ is contained 
in $Z$ since $Z$ is closed. 
On the other hand, 
$$\overline{S}\cong \overline{ \pi^{-1}(\pi(x'_v)) \cap {X}_v^\circ} 
= \pi^{-1}(\pi(x'_v)) 
\cong G_{{\widehat{x}_v}}/H'_v.$$
As a result, $G_{{\widehat{x}_v}}/H'_v\cong \overline{S}=S$.  
So by (\ref{eq2:face}), 
$G_{\widehat{x}_v}$ normalizes $H'_v$. 
On the other hand, we notice that $H'_v$ normalizes $H_v$.
Hence by \cite[Lemma 30.2]{T} and the inclusions~(\ref{eq:face}), we
get\footnote{The fact that $N_G(H_v)=N_G(H'_v)$ also results from \cite[\S5.1]{BP}.} 
the expected equalities: 
\begin{align*} 
N_G(H_I)=N_G(H'_v)=N_G (G_{\widehat{x}_v}).
\end{align*}

What is  left is the case where $v=0$. 
In this case, $I=\Sigma$ 
and $\widehat{\V}_{\Sigma}=\{0\}$ so that
$G_{\widehat{x}_v} \cong N_G(H)$. 
But $H_0=H_{\Sigma}=H$ up to $G$-conjugacy, 
whence the statement. 
\end{proof}

We now present some applications of Proposition~\ref{Pro:norm} to the
description of stabilizers of points in spherical embeddings.

Let $\gamma \in N_G(H)$ and let $f$ be a $B$-eigenvector of $\C(G/H)$
with weight $m \in M$. The map $g \mapsto f(g\gamma)$ is invariant by the right action
of $H$, and it is a $B$-eigenvector of $\C(G/H)$
with weight $m$. Therefore for some nonzero complex number $\omega_{H,f}(\gamma)$,
$$f(g \gamma) = \omega_{H,f}(\gamma) f(g),\quad\text{ for all }\quad g \in G.$$
The map
$$\omega_{H,f} \colon N_G(H) \rightarrow \C^*,\quad \gamma \mapsto 
\omega_{H,f}(\gamma),$$
is a character whose restriction to $H$ is trivial, and which only depends on $m$,
\cite[\S4.3]{Br97}.
We denote it by $\omega_{H,m}$. 
The reader is referred to \cite[Theorem 21.5]{T} for a proof of the following result. 
\begin{lemma} 
\label{lem:R}
The spherical subgroup $H$ is the kernel of the homomorphism
$$N_G(H) \rightarrow \Hom(M, \C^*), 
\quad \gamma \mapsto (m\mapsto
\omega_{H,m}(\gamma)).$$ 
\end{lemma}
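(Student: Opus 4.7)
The plan is to verify the two inclusions $H \subseteq \ker \Omega_H$ and $\ker \Omega_H \subseteq H$ separately. The first is straightforward: for $h \in H$, right translation by $h$ fixes every coset of $G/H$, so $f(gh) = f(g)$ for every rational function $f \in \C(G/H)$. Applied to a $B$-eigenvector of weight $m$, this yields $\omega_{H, m}(h) = 1$ for every $m \in M$, hence $\Omega_H(h) = 1$.

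For the reverse inclusion, let $\gamma \in N_G(H)$ with $\Omega_H(\gamma) = 1$. Right translation by $\gamma$ is a $G$-equivariant automorphism of $G/H$ that commutes with the left $B$-action, so it permutes the $B$-orbits and maps open orbits to open orbits; consequently it preserves the unique open $B$-orbit $U = BH/H$. In particular $\gamma H \in U$, so $\gamma \in BH \cap N_G(H)$, and we may write $\gamma = b h$ with $h \in H$ and $b = \gamma h^{-1} \in B \cap N_G(H)$. By the first part $\Omega_H(h) = 1$, so $\Omega_H(b) = 1$ as well, and it suffices to show $b \in H$.

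Evaluating the defining relation $f_m(g b) = \omega_{H, m}(b) f_m(g)$ at $g = e$ with the normalization $f_m(H) = 1$ gives $\omega_{H, m}(b) = f_m(b)$, and the $B$-eigenfunction property identifies this value with $m(b)$ (up to an inessential sign convention). The hypothesis $\Omega_H(b) = 1$ therefore becomes $m(b) = 1$ for every $m \in M$. Since $M = \X^*(G/H)$ is the sublattice of $\X^*(B) = \X^*(B/B_u)$ consisting of characters trivial on the image of $B \cap H$ in $B/B_u$, duality for tori forces the image of $b$ in $B/B_u$ to lie in that of $B \cap H$; equivalently, $b \in (B \cap H)\cdot B_u$, so we write $b = \beta u$ with $\beta \in B \cap H$ and $u \in B_u$. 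Since $b$ and $\beta$ both lie in $N_G(H)$, also $u = \beta^{-1} b \in B_u \cap N_G(H)$.

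The last step is the main obstacle, and it relies on the structural input that $N_G(H)/H$ is a diagonalizable algebraic group for any spherical subgroup $H$ (see \cite[\S 5]{BP}). Such a group contains no nontrivial unipotent subgroup, so the image of $B_u \cap N_G(H)$ in $N_G(H)/H$ is trivial, i.e., $B_u \cap N_G(H) \subseteq H$. Hence $u \in H$, $b = \beta u \in H$, and $\gamma = b h \in H$, completing the argument. Without the diagonalizability result one can only locate $b$ modulo the (possibly nontrivial) unipotent piece $B_u \cap N_G(H)$, so this structural fact is genuinely needed.
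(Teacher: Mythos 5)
Your proof is correct, but it follows a genuinely different route from the paper's. The paper disposes of the lemma by identifying the induced map $\overline{\Omega}_H\colon N_G(H)/H\to\Hom(M,\C^*)$ with the homomorphism $\lambda$ of \cite[Theorem 5.5]{K96} under the identification $N_G(H)/H\cong\Aut^G(G/H)$, and then simply quotes Knop's injectivity statement (alternatively the proof of \cite[Theorem 4.3]{Br97}). You instead make the injectivity explicit: you use that a $G$-equivariant automorphism preserves the open $B$-orbit to reduce to an element $b\in B\cap N_G(H)$, evaluate the eigenfunctions on $U\cong B/(B\cap H)$ to translate $\Omega_H(b)=1$ into $m(b)=1$ for all $m\in M$, and then use that $M$ is exactly the lattice of characters of $B$ trivial on $B\cap H$ together with duality for diagonalizable groups to place $b$ in $(B\cap H)\cdot B_u$; the residual unipotent ambiguity is killed by the diagonalizability of $N_G(H)/H$. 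All of these steps check out (in particular $B/(B\cap H)\to U$ is an isomorphism in characteristic zero, so $M$ really is the full sublattice of characters killing $B\cap H$, which is what the duality step requires), and the one nontrivial external input you invoke is legitimate and non-circular: it is the Brion--Pauer result from \cite[\S 5]{BP} (indeed, the decomposition $N_G(H)=H^0(C\cap N_G(H))$ quoted elsewhere in this paper already exhibits $N_G(H)/H$ as a quotient of a subgroup of the torus $C$, hence diagonalizable), which predates and is independent of Knop's theorem. What your approach buys is a more self-contained and transparent argument that isolates exactly where sphericity and the structure of $N_G(H)/H$ enter; what the paper's approach buys is brevity and the additional information contained in Knop's theorem (e.g.\ that the image of $\overline{\Omega}_H$ is a closed diagonalizable subgroup), which is not needed here.
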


Fix now an arbitrary simple embedding $X$ of $G/H$, and choose a point $x'$ 
in the
unique closed $G$-orbit of $X$.
There exist a simple toroidal $G/H$ embedding $\tilde{X}$ and a
proper birational $G$-equivariant
morphism $\pi :\tilde{X} \to X$ such that $\pi(\tilde{x}')=x'$
for some $\tilde{x}'$ in the unique closed $G$-orbit of $\tilde{X}$ 
(see e.g.~\cite[Proposition 2]{Br97}).
Let $(\sigma,\F)$ be the colored cone corresponding to the
simple embedding $X$, and
let $I(\sigma)$ be the set of all spherical roots in
$\Sigma$ that vanish on $\sigma$. 
Thus $\V_{I(\sigma)}$ is the minimal face of $\V$ containing $\sigma$.
Note that $\sigma \cap \V$ is the uncolored cone corresponding to the
simple toroidal embedding $\tilde{X}$.

We are now in a position to prove Theorem \ref{Th:stab}. 
\begin{proof}[Proof of Theorem \ref{Th:stab}] 
By choosing $X= \tilde X$,  we may actually assume that 
$X$ is toroidal. 
Let $v$ be a nonzero element in $\sigma \subset \V_{I(\sigma)}$. 
By \cite[Theorem 15.10]{T}, 
the identity map 
$G/H \to G/H$ and 
the canonical map
$G/H \to G/N_G(H)$ extend to
$G$-equivariant maps 
$$\pi_v \colon X_v \to {X} \quad \text{ and }\quad  
\pi \colon X \to \widehat{X},$$
respectively. 
As in the proof of Proposition \ref{Pro:norm}, 
we deduce from this the inclusions 
of spherical subgroups, 
\begin{align} \label{eq:stab2} 
H_{I(\sigma)}=H_v \subset H'_v \subset G_{x'} 
\subset G_{{\widehat{x}}},
\end{align}
where $H'_v$ is the stabilizer in $G$ of a point $x'_v$ 
in the closed $G$-orbit $X'_v$ 
such that $x'=\pi_x(x'_v)$ 
and $\widehat{x}:=\pi(x')$. 
By Proposition \ref{Pro:norm}, we have 
$N_G(H_{I(\sigma)}) = N_G(G_{{\widehat{x}}}).$ 
So by \cite[Lemma 30.2]{T} and (\ref{eq:stab2}), 
it follows that 
$N_G(G_{x'})=N_G(H_{I(\sigma)}).$ 
Then Lemma~\ref{lem:R} applied to the spherical
subgroup $G_{x'}$ implies that 
$$G_{x'} = \{ \gamma \in N_G(H_{I(\sigma)})\;|\; \omega_{G_{x'},m}(\gamma)=1, 
\text{ for all } m \in M\cap \sigma^\perp  \},$$ 
since the weight lattice of $G/G_{x'}$ is $M\cap \sigma^\perp$. 
This finishes the proof of the theorem.  
\end{proof}

Return to the case where $X$ is a simple embedding of $G/H$ (not necessarily toroidal). 
As a consequence of Theorem \ref{Th:stab}, we get the following 
result announced in the introduction. 

\begin{cor} \label{cor:stab}
With the preceding notation, the stabilizer $G_{x'}$ of $x'$ in $G$ contains the satellite $H_{I(\sigma)}$.
\end{cor}

\section{Limits of stabilizers of points in arc spaces} \label{sec:arc}
Let $\K:=\C((t))$ be the field of formal Laurent series, 
and let $\O:=\C[[t]]$ be the ring of formal power series.
For $X$ a scheme 
over $\C$,
denote by $X(\K)$ and $X(\O)$ the sets of $\K$-valued points
and $\O$-valued points of $X$, respectively.
If $X$ is a 
variety admitting an action of an algebraic group $A$,
then $X(\K)$ and $X(\O)$ both admit a canonical action of the group $A(\O)$
induced from the $A$-action on $X$.
For example, the action of the group $G$ on the homogeneous 
space $G/H$ by left multiplication yields 
an action of the group $G(\O)$ on $(G/H)(\K)$.

As shown by Luna and Vust, the set $N \cap \V$ 
parameterizes the $G(\O)$-orbits in $(G/H)(\K)$. 
More precisely, to each $\lambda \in(G/H)(\K)\setminus (G/H)(\O)$ one associates a valuation 
$v_\lambda\colon \C(G/H)^*\to \Z$ as follows (cf.~\cite[\S4.2]{LV}). 
The action 
of $G$ on $G/H$ induces a dominant morphism 
$$G\times \Spec \K \to G\times G/H \to G/H,$$
where the first map is $1\times \lambda$, whence an injection of fields 
$\iota_\lambda \colon \C(G/H)\to \C(G)((t)).$ 
Then we set  
$$v_\lambda:=v_t\circ \iota_\lambda \colon \C(G/H)^*\to \Z,$$ 
where $v_t:\C(G)((t))^*\to \Z$ is the natural discrete valuation on $\C(G)((t))$ given 
by the order in $t$ of the lowest term of formal series.
If $\lambda\in (G/H)(\O)$, we define $v_\lambda$ 
to be the trivial valuation. By \cite[\S\S4.4, 4.6]{LV}, 
$v_\lambda$ is a $G$-invariant discrete 
valuation of $G/H$, that is, an element of $N\cap \V$. 
Furthermore,  
for every $\gamma\in G(\O)$ and every $\lambda\in (G/H)(\K)$, 
we have $v_{\gamma \lambda}=v_{\lambda}.$ 

By \cite[Proposition 4.10]{LV} (see also 
\cite[Theorem 3.2.1]{GN}), 
the mapping 
\begin{align}\label{eq:mapping}
(G/H)(\K) \rightarrow N\cap \V,\quad \lambda 
\mapsto v_\lambda, 
\end{align} is a surjective map and the fiber over any $v\in
N\cap \V$ is precisely one $G(\O)$-orbit. Thus there is a one-to-one 
correspondence between the set of $G(\O)$-orbits in $(G/H)(\K)$ and the 
set $N\cap \V$. 
We write $\CC_v:=\{\lambda\in (G/H)(\K)\;|\; v_\lambda=v\}$ 
for the $G(\O)$-orbit 
in $(G/H)(\K)$ corresponding to $v\in N\cap \V$. 

Let $X$ be a spherical embedding of $G/H$ associated 
with a colored fan $\mathbb{F}$. 
The {\em support} of 
$\mathbb{F}$ is the set 
$$|\mathbb{F}| = 
\bigcup_{({\mathcal C},\F)\in \mathbb{F}} {\mathcal C}
\cap \V.$$ 
The group $G(\O)$ 
acts on the sets $X(\O)$ and on $(G/H)(\K)$ which are both 
viewed as subsets of $X(\K)$. 
Hence the group $G(\O)$ acts on 
$X(\O)\cap (G/H)(\K)$. 
By restriction to $X(\O)\cap(G/H)(\K)$, the mapping \eqref{eq:mapping}
induces a surjective mapping  
$$X(\O)\cap(G/H)(\K) \rightarrow 
N \cap |\mathbb{F}|,\quad \lambda \mapsto v_\lambda,$$
whose fiber over any $v\in 
N \cap |\mathbb{F}|$ is precisely one $G(\O)$-orbit. Thus there is a 
one-to-one correspondence between the set of $G(\O)$-orbits in $X(\O)\cap 
(G/H)(\K)$ and the set $N \cap |\mathbb{F}|$. 

Fix an $H$-adapted Levi subgroup
$L$ of $P$, 
and denote as usual by $C$ the neutral component of its center.  
Let $x:=eH$ be the base point in $G/H$. Any one-parameter subgroup 
$\lambda$ of $C$ yields a morphism of algebraic varieties, 
$$\widetilde{\lambda}\colon \C^*\to G/H,\quad t \mapsto \lambda(t) x, $$ 
whose comorphism gives rise to an element $\widehat{\lambda}$ 
of $(G/H)(\K)$. 

Pick a nonzero lattice point $v$ in $N\cap \V$. 
Since the open orbit of $X_v$ is $G/H$, one can assume 
that $x_v=eH$. Then, $\lambda$ is adapted to the 
elementary embedding $(X_v,x_v)$ (cf.~Definition~\ref{def:param}) 
if and only if $\widehat{\lambda}$ 
lies in $X(\O)\cap (G/H)(\K)$ 
and $\lim_{t\to 0} \widehat{\lambda}(t)$ 
exists in the closed $G$-orbit $X'_v$.  
One can choose $\lambda_v \in \X_*(C)$, adapted to $(X_v,x_v)$, such that 
$v_{\lambda_v}=v$.  
By \cite[\S5.4]{LV}, 
$$G(\O) \widehat{\lambda}_v=\CC_v,$$
and so $v=v_{\lambda_v}=v_{\widehat{\lambda}_v}$.  
Setting $\widetilde{\lambda}_v(0):=\lim_{t\to 0} \widetilde{\lambda}_v(t)$, 
one gets a morphism of algebraic varieties 
$$\widetilde{\lambda}_v \colon \C \longrightarrow X_v.$$
One can view the differential $\d \widetilde{\lambda}_v(0)$ of 
$\widetilde{\lambda}_v$ at $0$ as an element of $T_{x'_v}(X_v)$, 
where $x'_v:=\lim_{t\to 0}\lambda_v(t) x_v \in X'_v$. 

\begin{lemma} \label{Lem:tangent} 
The one-dimensional space $\T_{x'_v}(X_v)/\T_{x'_v}(X'_v)$ is
generated by the class of $\d \widetilde{\lambda}_v(0)$ 
in $\T_{x'_v}(X_v)/\T_{x'_v}(X'_v)$.
\end{lemma}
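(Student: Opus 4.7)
The plan is to reduce the statement to a transversality calculation on a toric slice via the local structure theorem attached to the adapted Levi $L\subset P$. The main geometric insight is that, after restricting to the $P$-invariant open set $Y=Px_v\cup Px'_v$ and factoring out the unipotent radical $P_u$, the neighborhood of $x'_v$ becomes an $r$-dimensional affine toric embedding $Z$ of the torus $C/C\cap H$ along the primitive ray $\Q_{\ge 0}v$; on $Z$ the curve $\widetilde{\lambda}_v$ is the standard coordinate line $t\mapsto t$, manifestly transverse to the codimension-one closed orbit.

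First, I would invoke Definition~\ref{def:adapted}(3), which yields a $P_u$-equivariant isomorphism $P_u\times Z\cong Y$ with $Z:=\overline{Cx_v}\cap Y$. Using that $\overline{Cx_v}$ is a toric embedding of $C/C\cap H$ inheriting its fan from the colored fan of $X_v$, and that an elementary embedding corresponds to a single ray, I would show that $\overline{Cx_v}$ has exactly two $C$-orbits---the open orbit $Cx_v$ and a codimension-one closed one---and that, after replacing $x'_v$ by a suitable element of its $P$-orbit, $Z$ is $C$-stable and equals $\overline{Cx_v}$ as the affine toric variety attached to the ray $\Q_{\ge 0}v$. Since $\lambda_v\in C$ and $x_v,x'_v\in Z$, the whole curve $\widetilde{\lambda}_v$ lies in $Z$.

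Next, I would transport the tangent-space computation to $Z$. Writing $Z':=Z\cap X'_v$ for the unique codimension-one $C$-orbit of $Z$, the product isomorphism $Y\cong P_u\times Z$ identifies $X'_v\cap Y$ with $P_u\times Z'$, and hence induces a canonical isomorphism
\[ \T_{x'_v}(X_v)/\T_{x'_v}(X'_v)\;\cong\;\T_{x'_v}(Z)/\T_{x'_v}(Z'). \]
Since $\widetilde{\lambda}_v$ already lies in $Z$, the vector $\d\widetilde{\lambda}_v(0)$ is already an element of $\T_{x'_v}(Z)$, and it suffices to show that its class modulo $\T_{x'_v}(Z')$ is nonzero. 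Choosing a basis $e_1,\ldots,e_r$ of $N$ with $e_1=v$ identifies $Z\cong\A^1\times(\C^*)^{r-1}$ and $Z'=\{0\}\times(\C^*)^{r-1}$; the image of $\lambda_v$ in $\X_*(C/C\cap H)$ being the primitive cocharacter $v=e_1$, the curve reads $\widetilde{\lambda}_v\colon t\mapsto (t,1,\ldots,1)$, whose derivative $\partial_{x_1}$ at $x'_v=(0,1,\ldots,1)$ is manifestly transverse to $Z'$.

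The main obstacle I anticipate is the first step, namely the clean identification of $\overline{Cx_v}$ with the standard affine toric variety attached to the ray $\Q_{\ge 0}v$, and in particular the verification---via the restriction of the Luna--Vust colored fan of $X_v$ to the cocharacter lattice of $C/C\cap H$---that the closed $C$-orbit in $\overline{Cx_v}$ is precisely $Z'=Z\cap X'_v$. Once this toric model is established, the tangent-space decomposition and the final transversality computation become essentially formal.
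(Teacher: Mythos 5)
Your argument is correct, but it is not the route the paper takes. The paper's proof invokes \cite[\S4.10]{LV} directly: it produces a smooth curve $\gamma\colon\mathscr{C}\to X_v$ through $x'_v$ that is transversal to $X'_v$ by construction, converts $\gamma$ into an arc $\widehat{\mu}\in(G/H)(\K)$, observes that the associated adapted one-parameter subgroup $\mu$ has image proportional to that of $\lambda_v$ in $\X_*(C/C\cap H)$, and then identifies $\lambda_v$ with $\mu$, so that $\d\widetilde{\lambda}_v(0)=\d\gamma(c)$ spans a complement to $\T_{x'_v}(X'_v)$. You instead localize via the local structure theorem (Definition~\ref{def:adapted}(3)), pass to the slice $Z=\overline{Cx_v}\cap Y$, and read off transversality in toric coordinates; this is the same mechanism the paper itself uses later in the proof of Proposition~\ref{Pro:norm}, so it is self-contained relative to the paper's toolkit. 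A genuine advantage of your route is that it makes completely explicit the one place where both proofs need an input: the curve $t\mapsto\lambda_v(t)x_v$ is transversal to $Z'$ if and only if the image of $\lambda_v$ in $\X_*(C/C\cap H)\cong N$ is the \emph{primitive} generator $v$ of the ray (were it $av$ with $a\ge 2$, the curve would read $t\mapsto(t^a,1,\ldots,1)$ and the derivative would vanish). The paper buries this same point in the step ``the images of $\lambda_v$ and $\mu$ are proportional \ldots\ so we can assume $\lambda_v=\mu$''; both proofs ultimately rest on the normalization $v_{\lambda_v}=v$ fixed before the lemma. Two small corrections to your write-up: there is no need to replace $x'_v$ by another point of its $P$-orbit, since $x'_v=\lim_{t\to0}\lambda_v(t)x_v$ already lies in $\overline{Cx_v}$ and, by adaptedness of $\lambda_v$, in the open $P$-orbit of $X'_v$, hence in $Y$; and $Z$ is in general only a $C$-stable open subset of $\overline{Cx_v}$ rather than all of it, though this is harmless because $Z=Cx_v\cup Cx'_v$ is exactly the two-orbit slice your computation requires.
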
 

\begin{proof} 
According to the local structure theorem for toroidal 
embeddings (\cite[\S3.4]{BP} or \cite[Theorem 29.1]{T}), it suffices 
to show the statement for elementary embeddings of a torus. 
So we can assume that $X_v = \mathbb{T} \cup X'_v$ is an elementary 
embedding of the algebraic torus $ \mathbb{T} :=C/C \cap H$ in $\C^n$. 
Arguing in coordinates $x_1,\ldots,x_n$, we can assume furthermore 
that the divisor $X'_v$ 
is given by the equation $x_1=0$. 
One can choose for $\lambda_v$ 
the adapted one-parameter subgroup $t \mapsto (t,1,\ldots,1)$. 
The statement is now clear. 
\end{proof}

\begin{lemma} \label{lem:CH}
We have $C\cap H_v=C\cap H.$
\end{lemma}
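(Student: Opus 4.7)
For $v = 0$ the claim is trivial since $H_0 = H$, so I assume $v \ne 0$; as $H_v$ depends only on the ray $\Q_{\ge 0} v$, I may also assume $v$ is primitive. My plan is to establish the two inclusions $C \cap H \subseteq C \cap H_v$ and $C \cap H_v \subseteq C \cap H$ separately, using the character $\chi_v$ that defines $H_v = \ker(\chi_v|_{H'_v})$ together with Lemma \ref{Lem:tangent}, which asserts that $\d\widetilde\lambda_v(0)$ generates the normal line $\T_{x'_v}(X_v)/\T_{x'_v}(X'_v)$.

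For $C \cap H \subseteq C \cap H_v$, let $c \in C \cap H$. Since $c$ commutes with every $\lambda_v(t)$ in the torus $C$ and $c x_v = x_v$, one has $c \cdot \widetilde\lambda_v(t) = \lambda_v(t)(c x_v) = \widetilde\lambda_v(t)$ for all $t \in \C$. Passing to $t = 0$ gives $c \in H'_v$, while the pointwise invariance of $\widetilde\lambda_v$ under $c$ forces $c \cdot \d\widetilde\lambda_v(0) = \d\widetilde\lambda_v(0)$; by Lemma \ref{Lem:tangent}, this means $\chi_v(c) = 1$, so $c \in H_v$.

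The reverse inclusion requires an explicit description of $C \cap H'_v$, which I would obtain via the local structure theorem. Since $L$ is adapted to $H$, the open $P$-stable subset $Y := P x_v \cup P x'_v$ of $X_v$ decomposes as $P_u \times S$ with $S := \overline{C x_v} \cap Y$ a toric variety under $C/(C \cap H)$ corresponding to the ray $\Q_{\ge 0} v \subset N_\Q$. The standard toric description of orbit isotropies then identifies the isotropy of $x'_v$ in $C/(C \cap H)$ with the image of $\lambda_v$, whence
$$C \cap H'_v \;=\; \lambda_v(\C^*) \cdot (C \cap H).$$
Moreover $\lambda_v(\C^*) \cap (C \cap H) = \{e\}$: equivalently, the induced homomorphism $\C^* \to C/(C \cap H)$ is injective, which follows from the primitivity of $v = v_{\lambda_v}$ in $N = \X_*(C/C \cap H)$. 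Hence each $c \in C \cap H'_v$ has a unique presentation $c = \lambda_v(s) h$ with $h \in C \cap H$.

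To finish, the reparametrization identity $\lambda_v(s) \widetilde\lambda_v(t) = \widetilde\lambda_v(st)$ yields $\lambda_v(s) \cdot \d\widetilde\lambda_v(0) = s \cdot \d\widetilde\lambda_v(0)$, so $\chi_v(\lambda_v(s)) = s$, while $\chi_v$ is trivial on $C \cap H$ by the first inclusion. Thus for $c = \lambda_v(s) h \in C \cap H_v$ one has $s = \chi_v(c) = 1$, hence $c = h \in C \cap H$, as required. The main technical point is the toric identification of $C \cap H'_v$; alternatively one could deduce $\dim(C \cap H'_v) = \dim(C \cap H) + 1$ from Corollary \ref{colors-data} (which gives $\X^*(G/H'_v) = M \cap v^\perp$ of rank $r - 1$) and complete the argument at the level of identity components.
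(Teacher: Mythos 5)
Your proof is correct. The first inclusion $C\cap H\subseteq C\cap H_v$ is argued exactly as in the paper: differentiate the identity $c\cdot\widetilde{\lambda}_v(t)=\widetilde{\lambda}_v(t)$, conclude $c\in H'_v$ fixing the class of $\d\widetilde{\lambda}_v(0)$, and invoke Lemma \ref{Lem:tangent}. For the reverse inclusion the paper takes a shorter, more formal route: it observes that $Cx_v\cup Cx'_v$ is an elementary embedding of the torus $C/(C\cap H)$, so that $C\cap H_v$ is a Brion subgroup of $C\cap H$ inside $C$, and then quotes the horospherical case (Example \ref{exa:1}): every subgroup of a torus is horospherical, hence equal to all of its Brion subgroups. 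Your argument instead unpacks what that reduction conceals: you identify $C\cap H'_v=\lambda_v(\C^*)\cdot(C\cap H)$ from the toric orbit structure of the slice $\overline{Cx_v}\cap Y$ furnished by the adaptedness of $L$, and you evaluate $\chi_v$ on $\lambda_v(\C^*)$ via the reparametrization identity $\lambda_v(s)\widetilde{\lambda}_v(t)=\widetilde{\lambda}_v(st)$, getting $\chi_v(\lambda_v(s))=s$, so that the kernel of $\chi_v$ on $C\cap H'_v$ is exactly $C\cap H$. Both routes are valid and rest on the same toric picture; yours is longer but more explicit, and it has the merit of recording why one should take $v$ primitive --- needed both for $\lambda_v(\C^*)\cap(C\cap H)=\{e\}$ and, implicitly, for Lemma \ref{Lem:tangent} itself, since $\d\widetilde{\lambda}_v(0)$ would vanish for a divisible $\lambda_v$ --- a point the paper leaves tacit.
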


\begin{proof} 
It follows from the definition of the Brion subgroup $H_v$ (cf.~Definition \ref{def:sat}) 
that $C \cap H_v$ is a Brion subgroup associated 
with the elementary embedding $C x_v \cup C x'_v$  
of the algebraic torus $ \mathbb{T}=C/C \cap H$. 
Since $\mathbb{T}$ is a torus,  
$C \cap H$ is equal to each of the Brion
subgroups of $\mathbb{T}$. Therefore we get $C\cap H$ = $C \cap H_v$.  
\end{proof}

Theorem \ref{Th:arc} can now be proved.
 
\begin{proof}[Proof of Theorem \ref{Th:arc}] 
We have to show that 
$H_v=\{\lim_{t\to 0}\gamma(t) 
\, ; \,\gamma\in G(\O)_{{\widehat{\lambda}_v}}\},$ 
where $G(\O)_{{\widehat{\lambda}_v}}$ is the stabilizer 
of $\widehat{\lambda}_v$ in $G(\O)$, that is, 
the set of $\gamma \in G(\O)$ such that $\gamma(t)
(\lambda_v(t)x_v)=\lambda_v(t)x_v$ for all $t\in\C^*$.  

In order to establish the inclusion $H_v \supset \{\lim_{t\to 0}\gamma(t) 
\, ; \,\gamma\in G(\O)_{{\widehat{\lambda}_v}}\}$, 
fix $\gamma\in G(\O)_{{\widehat{\lambda}_v}}$ and write  
$\gamma(0)$ for its limit at $0$. 
Let $H'_v$ be the stabilizer 
of $x'_v$ in $G$. One can 
assume that $H_v =\ker \chi_v \subset H'_v$ (cf.~Definition \ref{def:sat}). 
For any $t\in\C^*$,
\begin{align} \label{eq:arc}
\gamma(t)(\lambda_v(t) x_v)=\lambda_v(t)x_v.
\end{align}
Taking the limit at $0$ on both sides, we get that 
$\gamma(0) x'_v=x'_v$.
Hence $\gamma(0) \in H'_v$. 
Since
$\gamma(0) \in H'_v$, $\gamma(0)$ leaves invariant 
the tangent space ${T}_{x'_v}(X'_v)$.
Differentiating the equality (\ref{eq:arc}), we get
$$\gamma(0) \left(d \widetilde{\lambda}_v(0)\right)= 
d \widetilde{\lambda}_v(0) \mod {T}_{x'_v}(X'_v).$$
Since $H'_v$ acts on the one-dimensional
space ${T}_{x'_v}(X_v)/{T}_{x'_v}(X'_v)$ by the character $\chi_v$,
we deduce from Lemma \ref{Lem:tangent} 
that $\gamma(0)$ is in the kernel $H_v$ of $\chi_v$, 
whence the expected inclusion.

Let us prove the other inclusion. 
Pick $\gamma_0 \in H_v$.
We have to show that $\gamma_0=\lim_{t\to 0} \gamma(t)$
for some $\gamma\in G(\O)_{{\widehat{\lambda}_v}}$.
By \cite[Proposition and Corollary 5.2]{BP}, 
$N_G(H_v) = H_v^0 (C \cap N_G(H_v))$, 
with $H_v^0$ the neutral component of $H_v$, whence 
$ H_v \subset H_v^0 (C \cap H_v).$ 
Write $\gamma_0= \tilde{\gamma}_0 c$
with $\tilde{\gamma}_0 \in H_v^0$ and  $c \in C\cap H_v$.
Since $H_v^0$ is connected, 
there exists $l \in \Z_{>0}$ and 
$\eta_1,\ldots,\eta_l$ in $\mathfrak{h}_v:={\rm Lie}(H_v)$ 
such that 
$$\tilde{\gamma}_0=  \exp_{H_v}(\eta_1)\ldots \exp_{H_v}(\eta_l),$$
where 
$\exp_{H_v} \colon 
\mathfrak{h}_v \to H_v$ is the exponential map of $H_v$.

Denote by $(g,\xi) \mapsto g.\xi$ the adjoint action of 
the algebraic group $G$ on its Lie algebra. 
By \cite[Lemma 1.2]{B90}, $\lim_{t\to 0} \lambda_v(t). \xi$ 
exists and lies in Lie($H'_v$) for every $\xi$ 
in the Lie algebra $\mathfrak{h}$ of $H$. 
Moreover, according to \cite[Proposition 1.2]{B90}\footnote{The result is stated
in \cite[Proposition 1.2]{B90} for $H_v$ equals to its normalizer. 
However the proof
does not use this hypothesis.}, the Lie algebra of $H_v$ is
$$\lim_{t\to 0}\lambda_v(t). \mathfrak{h} 
:= \{ \lim_{t\to 0} \lambda_v(t). \xi \; ; \; \xi \in \mathfrak{h} \}.$$
Therefore for some 
$\xi_1,\ldots,\xi_l \in \mathfrak{h}$, 
$\eta_i=\lim_{t\to 0}\lambda_v(t).\xi_i$.  
Set for any $t\in\C^*$,
$$\tilde{\gamma}(t) :=\lambda_v(t)\exp_{H}(\xi_1) 
\ldots\exp_{H}(\xi_l) \lambda_v(t^{-1}),$$
where $\exp_{H} \colon\mathfrak{h} \to H$ is the exponential map of $H$.
Then for any $t\in \C^*$,
\begin{align*}
\tilde{\gamma}(t)(\lambda_v(t)x_v)
=\lambda_v(t)\exp_{H}(\xi_1) 
\ldots\exp_{H}(\xi_l)x_v
=\lambda_v(t)x_v
\end{align*}
since for $i =1,\ldots,l$, $\exp_H(\xi_i)$ lies in $H=G_{x_v}$.
As a result $\tilde{\gamma} \in G(\O)_{{\widehat{\lambda}_v}}$.

Using the natural embedding $G\hookrightarrow G(\O)$, 
one can view the element $c$ of $C\subset G$ 
 as an element of $G(\O)$. Set for any $t\in\C^*$,
$ \gamma(t):=\tilde{\gamma}(t)c.$ 
We have
\begin{align*}
\lim_{t\to 0}\tilde{\gamma}(t)
&=\lim_{t\to 0} \left(\lambda_v(t)\exp_H(\xi_1)\ldots \exp_H(\xi_l)\lambda_v(t^{-1})
\right) &\\
&=\lim_{t\to 0}\left( 
\exp_{G}(\lambda_v(t).\xi_1) \ldots 
\exp_{G}(\lambda_v(t).\xi_l)\right) 
=\exp_{H_v}(\eta_1) \ldots \exp_{H_v}(\eta_l)
=\tilde{\gamma}_0,
\end{align*}
with $\exp_G$ the exponential map of $G$, 
whence $\lim_{t\to 0}\gamma(t)=\tilde{\gamma}_0 c
= \gamma_0.$ 
By Lemma~\ref{lem:CH}, the element $c$ of $C\cap H_v$ 
is in $C\cap H$. In particular, $c$ is in the stabilizer of $x_v$ in $G$. 
Hence, 
$$\gamma(t)(\lambda_v(t)x_v)=\tilde{\gamma}(t)c(\lambda_v(t)x_v)
=\tilde{\gamma}(t)(\lambda_v(t)cx_v)
=\tilde{\gamma}(t)(\lambda_v(t)x_v)
=\lambda_v(t)x_v$$
because $\tilde{\gamma}\in G(\O)_{{\widehat{\lambda}_v}}$
and $c$ commutes with the image of $\lambda_v$. 
This proves that $\gamma$ is in $G(\O)_{{\widehat{\lambda}_v}}$. 
We have shown that 
$\gamma_0 \in \{\lim_{t\to 0}\gamma(t) 
\;;\;\gamma\in G(\O)_{{\widehat{\lambda}_v}}\}$ 
as desired.
\end{proof}

Theorem \ref{Th:arc} can be used in practice to compute the Brion 
subgroups $H_v$, where $v \in N\cap \V$. 
According to Theorem \ref{Th:inv-intro}, it suffices to compute finitely 
many of them to describe all the satellites $H_I$ of $H$, 
where $I \subset  \Sigma$. 
We illustrate this with some examples. 
In the following, 
there is no distinction made between  
$\lambda$ and ${\widehat{\lambda}}$, and so 
by a slight abuse of notation we write 
$G(\O)_{{\lambda}}$ for $G(\O)_{{\widehat{\lambda}}}$.

\begin{ex} \label{exa3:arc} 
In the setting of~\S\ref{sub:mot}, 
assume that $G=GL_n$ $(n\in \N^*)$ and that $T=D_n$ 
is the set of diagonal matrices in $G$. 
Then $T\times T$ is a maximal torus of $G\times G$ 
and it is adapted to $\Delta(G)$, cf.~\cite[\S2.4, Example~2]{Br97}.
Hence,
$$M \cong\X^*(D_{n}) \cong \Z^n.$$
The dual lattice $N=\Hom(M,\Z)$ identifies with the lattice of one-parameter 
subgroups of $D_{n}$, 
and $\V$ identifies with the set of sequences
$(\lambda_1,\ldots,\lambda_n)\in\Q^n$ with $\lambda_1 \ge\cdots\ge\lambda_n$.
Let $T^{+}_{n}$ (resp.~$T^-_{n}$) be the subgroup of $GL_{n}$
consisting of upper (resp.~lower) triangular matrices. 
The identity matrix $E$ lies in the open $T^+_{n} \times T_{n}^-$-orbit since $T^+_{n} T_{n}^-$
is open in $GL_{n}$. 
Consider the one-parameter subgroup $\lambda$ of $T \times T$ 
given by: 
$$\lambda( t)  = \left(\diag(t^{\lambda_1},\ldots,t^{\lambda_n}),
\diag (t^{-\lambda_1},\ldots,t^{-\lambda_n})\right), \; \text{ for all }\; t\in\C^*,$$
where 
$(\lambda_1,\ldots,\lambda_n)$ in $\Z^n$ and $\lambda_1
\ge\cdots\ge\lambda_n$. 

Write 
$(\lambda_1,\ldots,\lambda_n)=(\mu_1^{k_1},\ldots,\mu_r^{k_r})$ 
with $\mu_1> \cdots > \mu_r$ and $k_1+\cdots+k_r=n$ \hbox{$(k_i \in \N^*$)}. 
Let $P_\lambda$ be the (standard) parabolic subgroup 
of $GL_n$ 
consisting 
of the block upper triangular invertible matrices with diagonal blocks of sizes 
$k_1,\ldots,k_r$. 
Denote by $L_\lambda$ its standard Levi factor, 
and let $P_{-\lambda}$ 
be the opposite parabolic subgroup 
to $P_\lambda$ so that $P_{-\lambda}\cap P_\lambda=L_\lambda$. 

An easy computation 
allows us to determine the set of limits of points $(x(t),y(t)) \in G(\O)_{\lambda}$, 
and applying Theorem \ref{Th:arc}, we get that 
$$
H_{\lambda} = 
(P_{-\lambda}^{u} \times P_\lambda^{u})\Delta (L_\lambda),$$
where $P_\lambda^{u}$ and $P_{-\lambda}^{u}$ 
are the unipotent radical of $P_\lambda$ and $P_{-\lambda}$, 
respectively. 
Thus we recover the description of the satellites of \S\ref{sub:mot} 
for $G=GL_n$. 
\end{ex}

\begin{ex} \label{exa4:arc}
Let $n\in\Z$, $n \ge 3$. Assume that $G=SL_{n}$ 
and that $H\cong SL_{n-1}$ is the subgroup
$H=\left\{\begin{pmatrix} A & 0 \\
0 & 1\end{pmatrix} \; ; \; A \in SL_{n-1} \right\}.$ 
The spherical homogeneous space $G/H$ can be realized as
$$G/H\cong\{(x,y)\in\C^n\times(\C^{n})^* \; |\; \langle y,x\rangle=1\}.$$
The Levi subgroup
$L=\left\lbrace\begin{pmatrix}
a & 0 & 0 \\
0 & B & 0 \\
0 & 0 & c
\end{pmatrix} \, ; \, a,c \in \C^*, B \in GL_{n-2} \, , 
ac \det B=1 \right\rbrace$ 
is adapted to the stabilizer $\tilde{H} \cong H$ of the point 
$((1,0,\ldots,0,1),(1,0,\ldots,0))$.  
Let $\lambda$ be the one-parameter subgroup of 
the neutral component of the center of $L$, 
with $\lambda(t)= {\rm diag}(t,1,\ldots,1,  ,t^{-1}) $ for all $t\in\C^*$.
In order to compute $G(\O)_\lambda$, we exploit the elementary embedding
$$X:=\{[x_1;\ldots;x_n;y_1;\ldots;y_n;z_0]\;|\;\sum x_i y_i=z_0^2\}\subset \P^{2n}$$
of $G/H$.
The closed $G$-orbit of $X$ corresponds to the divisor $\{z_0=0\}$, 
and we have
$$\lambda(t)([1;0;\ldots,0;1;1;0;\ldots;0;1])=
[t;0;\ldots;t^{-1};t^{-1};0;\ldots,0;1]\in \P^{2n}.$$
From this, we easily determine $G(\O)_\lambda$ and 
by Theorem \ref{Th:arc}, we get that 
$$
H_\varnothing \cong 
\left\{\begin{pmatrix} a & 0 & 0 \\
u & B & 0 \\
v & w & a \\
\end{pmatrix}  \; ;\; B \in SL_{n-2}, \,u,v,w,a \in \C, \, a^2=1 \right\}.$$
\end{ex}

\section{Virtual Poincar\'{e} polynomials of $G/H_I$} \label{sec:Poincare}
This section is devoted to the proofs of Theorem \ref{Th2:Poincare} and 
Theorem~\ref{Th:P-one}. 
They 
are strongly inspired by the work of Brion and Peyre \cite{BPe}. 

In order to state the results of \cite{BPe} we need,  
let us assume 
for a while that $G$ is any complex connected linear algebraic group 
and that $H$ is any closed 
subgroup of $G$. 
We denote by $r_H$ the rank of $H$ and by $u_H$ the 
dimension of a maximal unipotent subgroup of $H$. 
Choose maximal reductive 
subgroups $H^{red} \subset  H$ and $G^{red} \subset  G$ such that 
$H^{red} \subset  G^{red}$. 

\begin{thm}[{\cite[Theorem 1(b)]{BPe}}] \label{Th:Brion-Peyre} 
Under the above assumptions, 
there exists a polynomial $Q_{G/H}$ with nonnegative integer coefficients 
such that 
$$\tP_{G/H}(t) = t^{u_G -u_H} (t-1)^{r_G-r_H} Q_{G/H}(t),$$
and
$$Q_{G/H}(t) = Q_{G^{red}/H^{red}}(t).$$
In particular, if $G$ is reductive, then  
$$\tP_{G/H}(t) = t^{u_{H^{red}} - u_H} \tP_{G/H^{red}}(t).$$
Moreover, $Q_{G/H}(0)=1$ if $H$ is connected. 
\end{thm}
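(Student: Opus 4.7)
The strategy combines multiplicativity of $\tP$ on locally trivial fibrations (already invoked in the paper) with the Levi and Bruhat decompositions. The plan divides into three stages.

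First, I would establish for any connected linear algebraic $G$ that
\begin{equation*}
\tP_G(t) = t^{u_G} (t-1)^{r_G}\, Q_G(t), \qquad Q_G = Q_{G^{red}},
\end{equation*}
with $Q_G$ a polynomial having nonnegative integer coefficients. For reductive $G$, the Bruhat cell decomposition $G/B = \bigsqcup_{w\in W_G} C_w$ with $C_w \cong \A^{\ell(w)}$ gives $\tP_{G/B}(t) = \sum_w t^{\ell(w)} =: W_G(t)$. Combining this with the locally trivial principal $B$-bundle $G \to G/B$ and $\tP_B(t) = (t-1)^{r_G} t^{u_G}$ (coming from $B \cong T \ltimes B_u$) yields $\tP_G(t) = t^{u_G}(t-1)^{r_G} W_G(t)$, so $Q_G = W_G$. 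For a general connected linear $G$, the Levi decomposition $G = G^{red} \ltimes G_u$ makes $G \to G^{red}$ a Zariski-trivial $\A^{\dim G_u}$-bundle, giving $\tP_G = t^{\dim G_u}\tP_{G^{red}}$ and $Q_G = Q_{G^{red}}$.

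Second, I would transfer this to $G/H$. The map $G/H^{red} \to G/H$ is locally trivial with fibre $H/H^{red} \cong H_u \cong \A^{u_H - u_{H^{red}}}$, so multiplicativity of $\tP$ yields
\begin{equation*}
\tP_{G/H^{red}}(t) = t^{u_H - u_{H^{red}}}\,\tP_{G/H}(t),
\end{equation*}
which is precisely the last identity of the theorem when $G$ is reductive and which reduces the remaining claims to the case where both groups are reductive. In the general case, choosing Levi complements with $H^{red} \subseteq G^{red}$, the induced map $G/H \to G^{red}/H^{red}$ is similarly a locally trivial affine-space bundle; routine bookkeeping with Stage 1 gives
\begin{equation*}
\tP_{G/H}(t) = t^{u_G - u_H}(t-1)^{r_G - r_H}\,Q_{G^{red}/H^{red}}(t),
\end{equation*}
and one sets $Q_{G/H} := Q_{G^{red}/H^{red}}$.

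The main obstacle is showing nonnegativity of the coefficients of $Q_{G/H}$, which by the above reduces to the case that both $G$ and $H$ are reductive. Here I would apply a Bia{\l}ynicki--Birula decomposition on a smooth $G$-equivariant projective completion of $G/H$ induced by a generic one-parameter subgroup of a maximal torus of $G$: this stratifies $G/H$ into locally closed pieces of the form $(\C^*)^{a_i} \times \A^{b_i}$, so additivity gives $\tP_{G/H}(t) = \sum_i (t-1)^{a_i} t^{b_i}$. Every stratum contains a torus of rank at least $r_G - r_H$, so the factor $(t-1)^{r_G - r_H}$ extracts uniformly, leaving a polynomial with nonnegative integer coefficients. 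Finally, when $H$ is connected, $G/H$ is irreducible; tracking the unique open dense BB-stratum shows that the constant term of $Q_{G/H}$ equals $1$.
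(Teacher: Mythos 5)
First, a caveat about the comparison you asked for: the paper does not prove this statement at all. It is imported verbatim from Brion--Peyre \cite[Theorem 1(b)]{BPe} and used as a black box, so there is no internal proof to measure your argument against; your proposal has to stand on its own.

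Your Stages 1 and 2 are sound and are essentially the standard reductions: Bruhat plus the local triviality of $G\to G/B$ give $\tP_G(t)=t^{u_G}(t-1)^{r_G}W_G(t)$, and $G/H^{red}\to G/H$ is an iterated affine bundle (one small correction: for non-reductive $G$ you must pass to $H^{red}$ \emph{before} projecting to $G^{red}$, because $H_u$ need not be contained in $G_u$, so the ``induced map $G/H\to G^{red}/H^{red}$'' is not defined as stated; after replacing $H$ by $H^{red}\subseteq G^{red}$ the projection $G/H^{red}\to G^{red}/H^{red}$ has fibre $G_u$ and the bookkeeping goes through). The genuine gap is Stage 3, which is where all the content of the theorem lives. (i) The claim that the Bia{\l}ynicki--Birula cells of a smooth equivariant completion $X$ meet $G/H$ in pieces isomorphic to $(\C^*)^{a_i}\times\A^{b_i}$ is unjustified: the BB cells are affine bundles over the fixed locus of $X$, and their intersections with the boundary $X\setminus (G/H)$ are merely $T$-stable closed subsets, with no reason to be coordinate subspaces or subtori. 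For instance, an affine plane minus a smooth conic meeting the line at infinity in two points has $\tP=t^2-t+1$, which is not a nonnegative combination of terms $(t-1)^at^b$; nothing in your setup rules out this kind of intersection pattern inside a cell, so nonnegativity cannot be read off cell by cell without a real argument about how the boundary meets each cell. (ii) Even granting such a decomposition, ``every stratum contains a torus of rank at least $r_G-r_H$'' is not a proof that $a_i\ge r_G-r_H$ for every $i$, which is what you need to extract the factor $(t-1)^{r_G-r_H}$ termwise; as stated this is a non sequitur. (iii) The claim $Q_{G/H}(0)=1$ for connected $H$ concerns the coefficient of the \emph{lowest} power of $t$ in $\tP_{G/H}$ (since $\tP_{G/H}(t)\sim t^{u_G-u_H}(-1)^{r_G-r_H}Q_{G/H}(0)$ as $t\to 0$), hence is governed by the strata with smallest $b_i$, not by the open dense one; tracking the open stratum controls the leading coefficient, which is a different (and easy) statement. (iv) Your Stage 3 makes no use of the connectedness of $H$, yet the conclusion $Q_{G/H}(0)=1$ genuinely fails for disconnected $H$ (e.g.\ $SL(2)/N_G(T)$ has $\tP=t^2$, so $Q=t$), so whatever argument is given must see the component group.

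For orientation: the proof in \cite{BPe} replaces your Stage 3 by the multiplicativity $\tP_G=\tP_{G/H}\,\tP_H$ for connected reductive $H\subseteq G$ (a weight/purity argument in the spirit of Dimca--Lehrer \cite{DL97}, the same input this paper uses in the proof of Theorem \ref{Th2:Poincare}), combined with an averaging over the finite group $H/H^{0}$ via equivariant weight polynomials. This expresses $Q_{G/H}$ through Weyl-group generating functions and invariants in cohomology, and positivity is established there rather than through a geometric cell decomposition of $G/H$. If you want to salvage a geometric proof along your lines, you would need to work with a completion whose boundary is a torus-stable simple normal crossing divisor adapted to the BB cells and then prove the $(\C^*)^{a}\times\A^{b}$ structure of the intersections; that is a substantial argument, not a routine one.
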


Let us now return to the case where $G$ is reductive 
and $H$ is a spherical subgroup of $G$. 
For each subset $I$ of the set of 
spherical roots $\Sigma$ 
of $G/H$, 
choose a maximal reductive subgroup $H_I^{red} \subset  H_I$ of the satellite $H_I$ 
of $H$. We briefly denote by $r_I$, $u_I$ and $u_I^{red}$ the integers $r_{H_I}$, 
$u_{H_I}$ and $u_{H_I^{red}}$, respectively. Note that $r_{I} = r_{H_I^{red}}$. 

\begin{lemma}[{\cite{BPe}}] \label{lem:Brion-Peyre}
Let $I \subset  \Sigma$. 
Then $H_I^{red}$ is contained in a $G$-conjugate of $H$. 
\end{lemma}  

\begin{proof}
If $I =\Sigma$, the statement is clear. 
Let $I \subsetneq \Sigma$, and pick 
a lattice point $v$  in $N\cap \V_I^\circ$. 
Without loss of generality, 
we may assume that $H_I^{red}=H_v^{red}$ is contained in $H_v$. 

We now resume the arguments of \cite[Sect.~2]{BPe}. 
Consider the action of $H_v^{red}$ on the tangent space $T_{x'_v}(X_v)$ 
and choose an $H_v^{red}$-invariant complement $N$ to the 
tangent space $T_{x'_v}(X'_v)$. 
By construction, $H_v^{red}$ fixes $N$ pointwise. 
Then we can find an $H_v^{red}$-invariant subvariety $Z$ 
of $X_v$ such that $Z$ is smooth at $x'_v$ and  
$T_{x'_v}(Z)=N$. Therefore $H_v^{red}$ fixes pointwise a neighborhood 
of $x'_v$ in $Z$ and this neighborhood meets the open $G$-orbit 
$G/H$. Thus we may suppose that $H_v^{red} = H_I^{red}$ is contained in $H$, 
as expected. 
\end{proof}

Since the satellite $H_I$ is defined up to $G$-conjugacy, 
Lemma \ref{lem:Brion-Peyre} allows us to assume from 
now on that $H_I^{red}$ 
is contained in $H$ for each subset $I\subset  \Sigma$. 

\begin{thm} \label{Th2:Poincare}
Assume that $H$ is connected, and let $I \subset \Sigma$.
\begin{enumerate}
\item We have:
$$\tP_{G/H_I}(t) = \tP_{G/H}(t) \tP_{H/H_I^{red}}(t) t^{-(u_I -u_{I}^{red})}.$$
In particular, the ratio 
$$R_I(t^{-1}):=\dfrac{\tP_{G/H_I}(t)}{ \tP_{G/H}(t)}$$ is a polynomial $R_I$ in $t^{-1}$ with integer 
coefficients and constant term 1. 
\item The degree of $R_\varnothing$ is $u_G - u_H=u_\varnothing - u_H$.
\item The degree of $R_I$ is $u_I -u_H$, provided that $H_I$ is connected. 
\end{enumerate}
\end{thm}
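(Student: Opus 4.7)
The strategy is to factor the relation between $G/H_I$ and $G/H$ through the common intermediate variety $G/H_I^{red}$, using Brion--Peyre (Theorem~\ref{Th:Brion-Peyre}) to strip off unipotent radicals and the multiplicativity of $\tP$ along fibrations.

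First, by Lemma~\ref{lem:Brion-Peyre} I may assume $H_I^{red}\subseteq H$. Applying Theorem~\ref{Th:Brion-Peyre} to the pair $(G,H_I)$ yields $\tP_{G/H_I}(t) = t^{u_I^{red}-u_I}\tP_{G/H_I^{red}}(t)$. The canonical projection $G/H_I^{red}\to G/H$, $gH_I^{red}\mapsto gH$, is the \'etale-locally trivial fiber bundle with fiber $H/H_I^{red}$ associated to the principal $H$-bundle $G\to G/H$, and hence by the multiplicativity of $\tP$ on locally trivial fibrations recalled in the introduction, $\tP_{G/H_I^{red}}(t) = \tP_{G/H}(t)\tP_{H/H_I^{red}}(t)$. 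Combining the two identities gives the displayed formula in (1). For the polynomial-in-$t^{-1}$ claim, I will use that $\dim(H/H_I^{red}) = \dim H_I - \dim H_I^{red} = u_I-u_I^{red}$, the first equality because all satellites of $H$ have the same dimension and the second by the Levi decomposition $\dim R_u H_I = u_I-u_I^{red}$. So $\tP_{H/H_I^{red}}(t)$ is a polynomial in $t$ of degree exactly $u_I-u_I^{red}$, and dividing by $t^{u_I-u_I^{red}}$ gives a polynomial in $t^{-1}$ with integer coefficients. Its constant term in $t^{-1}$ is the leading coefficient of $\tP_{H/H_I^{red}}$, which equals $1$ because $H/H_I^{red}$ is irreducible ($H$ being connected) and the leading virtual-Poincar\'e coefficient of any irreducible variety equals $1$ (by additivity applied to a smooth projective compactification).

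For parts (2) and (3), I apply Brion--Peyre a second time, now to the pair $(H,H_I^{red})$:
\[
\tP_{H/H_I^{red}}(t) = t^{u_H-u_I^{red}}(t-1)^{r_H-r_I}\,Q_{H/H_I^{red}}(t).
\]
Substituting into the formula of (1) and simplifying exponents gives
\[
R_I(t^{-1}) = t^{u_H-u_I}(t-1)^{r_H-r_I}\,Q_{H/H_I^{red}}(t).
\]
Viewed as a Laurent polynomial in $t$, the minimal exponent appearing is exactly $u_H-u_I$ \emph{provided} $Q_{H/H_I^{red}}(0)\neq 0$, since $(t-1)^{r_H-r_I}$ has nonzero constant term $(-1)^{r_H-r_I}$. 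This yields $\deg R_I = u_I-u_H$. The last clause of Theorem~\ref{Th:Brion-Peyre} ensures $Q_{H/H_I^{red}}(0)=1$ as soon as $H_I^{red}$ is connected: in case (3) this follows from the hypothesis that $H_I$ is connected via Mostow's theorem (the Levi factor of a connected algebraic group in characteristic $0$ is connected), and in case (2) it follows from the explicit structure of the horospherical satellite $H_\varnothing = L_0\cdot P_u$, where $P=L\ltimes P_u$ is a parabolic and $L_0\subseteq L$ is the connected subgroup containing $[L,L]$ cut out by a collection of characters of $L$. Specializing $I=\varnothing$ uses $u_\varnothing = u_G$, since a horospherical subgroup contains a maximal unipotent subgroup of $G$, which gives the form of (2).

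The only really delicate point is the connectedness of $H_\varnothing^{red}$ needed in (2); once it is in hand, the entire proof is the direct assembly of two applications of Brion--Peyre with the fibration identity $\tP_{G/H_I^{red}} = \tP_{G/H}\,\tP_{H/H_I^{red}}$.
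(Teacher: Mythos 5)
Your parts (1) and (3) follow essentially the paper's route: the fibration $G/H_I^{red}\to G/H$ combined with two applications of Theorem~\ref{Th:Brion-Peyre}. Two remarks on those parts. First, the multiplicativity $\tP_{G/H_I^{red}}(t)=\tP_{G/H}(t)\,\tP_{H/H_I^{red}}(t)$ cannot be deduced from the statement recalled in the introduction, which concerns Zariski-locally trivial fibrations: the bundle $G/H_I^{red}\to G/H$ is in general only \'etale-locally trivial, and for such fibrations multiplicativity of $\tP$ fails without further hypotheses (e.g.\ $SL(2)/T\to SL(2)/N_G(T)$ has fiber $\Z/2$, yet $\tP_{SL(2)/T}(t)=t^2-1\neq 2t^2$). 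What makes the step legitimate is precisely the connectedness of $H$, via the theorem of Dimca--Lehrer that the paper cites at this point; your argument needs that input rather than the introduction's remark. Second, your part (3) (connectedness of $H_I$ forces connectedness of the Levi factor $H_I^{red}$, hence $Q_{H/H_I^{red}}(0)=1$ and the minimal $t$-exponent of $R_I(t^{-1})$ is $u_H-u_I$) is correct and essentially equivalent to the paper's computation of the leading asymptotics of $\tP_{G/H_I}$ as $t\to 0$.

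The genuine gap is in part (2). You assert that $H_\varnothing^{red}$ is connected because $H_\varnothing=L_0\cdot P_u$ with $L_0$ ``the connected subgroup cut out by characters of $L$''. But the common kernel of a family of characters of $L$ is the preimage of a diagonalizable (possibly disconnected) subgroup of the torus $L/(L,L)$, so $L_0$ and $H_\varnothing$ need not be connected even when $H$ is. The paper's own Example~\ref{exa:2} is a counterexample: for $G=SL(2)$ and $H=T$ connected, one has $H_\varnothing\cong U_2$, which has two components, and $H_\varnothing^{red}\cong\{\pm E\}$ is disconnected. Hence you cannot invoke the last clause of Theorem~\ref{Th:Brion-Peyre} to conclude $Q_{H/H_\varnothing^{red}}(0)=1$, and without the nonvanishing $Q_{H/H_\varnothing^{red}}(0)\neq 0$ your degree computation for $R_\varnothing$ does not close. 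The paper circumvents this by working on the other side of the ratio: $N_G(H_\varnothing)$ is a parabolic subgroup $P$ and $G/H_\varnothing\to G/P$ is a Zariski-locally trivial torus fibration with fiber $(\C^*)^r$, so $\tP_{G/H_\varnothing}(t)=(t-1)^r\,\tP_{G/P}(t)$ and $\tP_{G/H_\varnothing}(0)=(-1)^r\neq 0$; comparing with $\tP_{G/H}(t)\sim t^{u_G-u_H}(-1)^{r_G-r_H}$ as $t\to 0$ then forces $\deg R_\varnothing=u_G-u_H$. You should replace the connectedness claim by this argument, or otherwise prove directly that $Q_{H/H_\varnothing^{red}}(0)\neq 0$.
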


\begin{proof}
(1) Set $X=G/H_I^{red}$, $Y=G/H$ and $F=H/H_I^{red}$, 
and consider the locally trivial fibration $F \hookrightarrow 
X \stackrel{f}{\to} Y$ for the complex analytic topology. 

\begin{claim}\label{claim:local-system}
The local systems $R^{j}f_* \C_{X}$ are constant  
for each $j$.
\end{claim}

\begin{proof} 
Recall that 
$Y$ being a connected complex algebraic variety with
the complex analytic topology, 
for any choice of a base point $y \in Y$, there in an equivalence 
of categories between the category of local systems on $Y$ 
and the category of representations of the fundamental group $\pi_1(Y,y)$. 
So for each $j$ there is a natural $\pi_1(Y,y)$-action on 
the cohomology fiber 
$R^{j}f_* (\C_{X})_y \cong H^j(f^{-1}(y),\C) \cong  H^j (F,\C)$. 
Hence it is enough to show that this action is trivial. 
We are dropping, later on, the mention of the base point for the fundamental 
group. 

Since the fiber $F=H/H_I^{red}$ is connected ($H$ being connected), 
we deduce 
from the long exact sequence of homotopy groups, 
$$\cdots \to \pi_1(F) \to \pi_1(X) \to \pi_1(Y) \to \pi_0(F)=1 \to \cdots, $$ 
that the morphism $ \pi_1(X) \to \pi_1(Y)$ is surjective. 

Let us now make use of 
the pullback of the fiber bundle $X \to Y$ by the map $f \colon X \to Y$. 
We have $f^*(X) \cong X \times F$ and the following diagram 
commutes, 
where the below and left arrows are the natural projections: 
\begin{center}
\hspace{0.25cm}\xymatrix{X \ar[r]^{f} & Y  \\ f^*(X) \cong X \times F\ar[u]  \ar[r] & X \ar[u]_{f}} 
\end{center}
The natural action of $\pi_1(X)$ on the cohomology 
fiber $H^j (F,\C)$ 
furnishes a map $\rho \colon  \pi_1(X) \to {\rm Aut}(H^j (F,\C))$
which 
is the composition of the surjective map $ \pi_1(X) \to \pi_1(Y)$ 
and the representation $\pi_1(Y) \to {\rm Aut}(H^j (F,\C))$ 
induced by the local system $R^{j}f_* \C_{X}$. 
Since the fibration $f^*(X) \cong X \times F \to X$ is trivial, 
the action of $\pi_1(X)$ on $H^j (F,\C)$ is trivial, that is, 
$\rho$ is trivial. 
The map $ \pi_1(X) \to \pi_1(Y)$ being surjective, we 
deduce that the representation $\pi_1(Y) \to {\rm Aut}(H^j (F,\C))$ 
is trivial too, as required. 
\end{proof}

By Claim \ref{claim:local-system}, 
we can apply \cite[Theorem 6.1 (ii)]{DL97}  
to the algebraic morphism $f$ 
to obtain:
$$\tP_{G/H_{I}^{red}}(t) = \tP_{G/H}(t) \tP_{H/H_{I}^{red}}(t).$$
Thus by Theorem \ref{Th:Brion-Peyre}, we get
\begin{align*}
\tP_{G/H_{I}}(t) 
= \tP_{G/H}(t) \tP_{H/H_{I}^{red}}(t) t^{ -( u_I-u_I^{red})}. 
\end{align*}
In addition, $\tP_{H/H_{I}^{red}}(t)$ is a polynomial with integer 
coefficients. 
Since $\dim G/H_I =\dim G/H$, 
$\tP_{G/H_I}(t)$ and $\tP_{G/H}(t)$ have the same degree,
and so the degree of $\tP_{H/H_I^{red}}(t)$ must be equal to
$d:=u_{I}-u_{I}^{red}.$ 
Hence for some integers $a_0,\ldots,a_{d-1},a_d $, we have
\begin{align*}
P_{H/H_I^{red}}(t) t^{-(u_{I}-u_{I}^{red})}
&= ( a_0 +a_1 t+ \cdots + a_{d-1} t^{d-1} + a_d t^{d}) t^{-d} 
= R_I(t^{-1}), &
\end{align*}
where 
$R_I(X)= a_{0} X^{d} + a_{1} X^{d-1}+ \cdots + a_{d-1} X + a_d$. 
It remains to show that the constant term $a_d$ of $R_I$ is 1. 
Since $G/H_I$ and $G/H$ are irreducible of the same dimension,
the leading term 
of $\tP_{G/H_I}(t)$ and $\tP_{G/H}(t)$ is $t^{\dim G/H}$,
whence $a_d=1$.

(2) Because $H_\varnothing$ is horospherical, its normalizer $N_G(H_\varnothing)$ in $G$ is
a parabolic subgroup of $G$, and we have a locally trivial fibration  
$G/H_\varnothing \to G/N_G(H_\varnothing)$ for the Zariski topology, 
with fiber isomorphic to $N_G(H_\varnothing)/H_\varnothing$. 
The algebraic torus $N_G(H_\varnothing)/H_\varnothing$  
has dimension the rank $r$ of $G/H$.
Therefore,
\begin{align} \label{eq:poincare}
\tP_{G/H_\varnothing}(t) = (t-1)^{r} \tP_{G/N_G(H_\varnothing)}(t).
\end{align}
Note that $\tP_{G/N_G(H_\varnothing)}(0)=1$. 
On the other hand, by Theorem \ref{Th:Brion-Peyre}, 
$\tP_{G/H}(t)$ is equivalent to 
$ (-1)^{r_G-r_H} t^{u_{G}-u_{H}}$ when $t$ goes to $0$ 
since $Q_{G/H}(0)=1$ for connected~$H$. 
So, according to \eqref{eq:poincare} and the assertion (1), we obtain that
$$(-1)^{r} =(-1)^{r_G-r_H}  t^{u_{G}-u_{H}}  t^{- \deg R_\varnothing}$$
since $R_\varnothing(0)=1$, which forces 
$\deg R_\varnothing = u_G -u_H.$ 
To conclude, notice that $u_G=u_\varnothing$ because $H_\varnothing$ is horospherical.

(3) Assume that $H_I$ is connected. By Theorem \ref{Th:Brion-Peyre},  
$\tP_{G/H_v}(t) $ 
is equivalent to  
$ (-1)^{r_G-r_{I}}t^{u_{G}-u_{I}}$ 
  when $t$ goes to $0$. 
So, from the assertion (1) we deduce that 
$$(-1)^{r_G-r_{I}}  t^{u_{G}-u_{I}}  = (-1)^{r_G-r_H} t^{- \deg R_I}  t^{u_{G}-u_{H}}$$
since $R_I(0)=1$, which forces 
$\deg R_I = u_I -u_H.$ 
\end{proof}

\begin{ex} \label{exa:P}
Keep the notations of the main illustrating example, \S\ref{sub:mot}. 
Given $I \subset  S$, let $\Phi_I$ be the root system generated by $I$ 
and write $\Phi_I^+$ for the set of its positive roots with respect to $I$. 
Note that  $\dim G/P_I = \dim P_I^{u} = | \Phi_S^+ \setminus \Phi_I^+ |$, 
where $P_I^{u}$ is the unipotent radical of $P_I$. 
Simple computations show that 
$$R_I (t^{-1}) = \tP_{G/P_I}(t)  t^{- | \Phi_S^+ \setminus \Phi_I^+ |} 
= 
\prod\limits_{\alpha \in \Phi_S^+ \setminus \Phi_I^+}
\left(\frac{t^{{\rm ht}(\alpha) +1} -1}{t^{{\rm ht}(\alpha)}-1} \right) t^{- |  \Phi_S^+ \setminus \Phi_I^+|},$$
where ${\rm ht}(\alpha)$ denotes the height of $\alpha$ for $\alpha \in \Phi^+_S$. 
In particular, we obtain that the right hand side is 
a polynomial in $t^{-1}$. 
\end{ex}

\begin{rem} 
When $R_I$ verifies the statement of Conjecture \ref{conj:P}, 
it would be interesting to find an algorithm for computing the polynomial $R_I$ 
using combinatorial properties of the spherical roots in $I \subset  \Sigma$,  
as in Example~\ref{exa:P}. 
\end{rem}

\begin{ex} \label{exa4:P}
Assume that $G=SL_2\times SL_2\times SL_2$ and 
that $H$ is $SL_2$,  diagonally embedded in $G$. 
The spherical homogeneous space $G/H$ 
admits three spherical roots $s_1,s_2,s_3$
and so its valuation cone has $2^3=8$ faces. By symmetry,
$H_{\{s_1\}} \cong  H_{\{s_2\}} \cong  H_{\{s_2\}}$ and 
$ H_{\{s_1,s_2\}} \cong
H_{\{s_2,s_3\}} \cong  H_{\{s_1,s_3\}}.$
Applying Theorem~\ref{Th:arc}, we can compute the satellites $H_{\{s_1\}}$, 
$H_{\{s_1,s_2\}} $ and $H_{\varnothing}$ and we find that 
\begin{align*}
H_{\{s_1\}} &= 
\left\{ \left( \begin{pmatrix} a_1 & 0 \\
c_1 & a_1^{-1} \end{pmatrix},\begin{pmatrix} a_1 & b_2 \\
0 & a_1^{-1} \end{pmatrix},\begin{pmatrix} a_1 & 0 \\
0 & a_1^{-1} \end{pmatrix}\right) \; ; \; a_1 \in \C^*, c_1,b_2 \in \C \right\} ,&\\
H_{\{s_1,s_2\}} &=
\left\{ \left( \begin{pmatrix} a_1 & 0 \\[0.1em]
c_1 & a_1^{-1} \end{pmatrix},\begin{pmatrix} a_1 & b_2 \\
0 & a_1^{-1} \end{pmatrix},\begin{pmatrix} a_1 & b_2 \\
0 & a_1^{-1} \end{pmatrix}\right)\; ; \; a_1 \in \C^*, c_1,b_2 \in \C \right\} ,&\\[0.1em]
H_{\varnothing} &= 
\left\{ \left(\begin{pmatrix} a_1 & 0 \\
c_1 & a_1 \end{pmatrix},\begin{pmatrix} a_1 & b_2 \\
0 & a_1 \end{pmatrix},\begin{pmatrix} a_1 & b_3 \\
0 & a_1 \end{pmatrix}\right)  \; ; \; c_1,b_2,b_3 \in \C \text{ and }a_1^2=1\right\}.&
\end{align*}
We observe that $G/H_\varnothing$ is a locally trivial fibration 
for the Zariski topology over $(\P^1)^3$ 
with fiber $(\C^*)^3$. So we get 
$$\tP_{G/H_\varnothing}(t) = (t-1)^3(t+1)^3.$$
On the other hand, $H_{\{s_1\}}$ 
and $H_{\{s_1,s_2\}}$ are connected. 
By Theorem \ref{Th:Brion-Peyre}, we get: 
\begin{align*} 
\tP_{G/H_{\{s_1\}}}(t)=
\tP_{G/H_{\{s_1,s_2\}}}(t) = t(t-1)^2 (t+1)^3. 
\end{align*}
In conclusion, 
$R_{\varnothing} (t^{-1})= 1 -t^{-2}$ and 
$R_{\{s_1\}}(t)=R_{\{s_1,s_2\}}(t^{-1}) =  1+t^{-1}$. 
\end{ex}

\begin{ex} \label{exa2:P}
Let $G=SL_{n}$ and let $H$ be a maximal standard Levi factor of semisimple type
$SL_{n-1} \times \C^*$.  The homogeneous space $G/H$ is spherical and admits
only one elementary embedding, up to isomorphism, $X:=\P(\C^n) \times
\P((\C^n)^*) \cong\P^{n-1} \times \P^{n-1} $.
So $G/H$ has a unique satellite $H_\varnothing$ which is horospherical.
The unique closed $G$-orbit of $X$ is
$$X'= \{ ( [x_1 :\cdots : x_n ] ,[y_1 :\cdots : y_n ] )\in\P^{n-1} \times \P^{n-1} \; |\;
\sum_{i=1}^n x_i y_i=0\}.$$
The variety $X'$ is a locally trivial fibration for the Zariski topology 
over $\P^{n-1}$ with
a fiber isomorphic to $\P^{n-2}$. 
In addition, denoting by $H'$ the stabilizer of a point in $X'$,
we know that
$G/H_{\varnothing}$ is a locally trivial fibration for the Zariski topology  
over $G/H' \cong X'$
with fiber $\C^*$.
Hence
$$\tP_{G/H_{\varnothing}}(t)  =(1+t+\cdots +t^{n-1}) (1+t+\cdots+t^{n-2})(t-1).$$
Observing that $\tP_{G/H}= \tP_{X}(t) - \tP_{X'}(t)$, we get 
$$\tP_{G/H}(t) =  (1+t+\cdots+t^{n-1}) t^{n-1}.$$
In conclusion, 
$R_{\varnothing}(t^{-1}) 
= 1 -t ^{-(n-1)}.$
\end{ex}

\begin{thm} \label{Th:P-one} 
Assume that the spherical homogeneous space $G/H$ is of rank one. 
Then $R_\varnothing$ is a polynomial with 
integer coefficients. 
\end{thm}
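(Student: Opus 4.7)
The plan is to start by disposing of the horospherical case: if $H$ is horospherical, then Example~\ref{exa:1} gives $H_\varnothing$ $G$-conjugate to $H$, so $R_\varnothing = 1$, which is trivially a polynomial with integer coefficients. Assume henceforth that $H$ is not horospherical. The valuation cone $\V$ is then a strictly convex half-line with primitive lattice generator $v$, so the elementary embedding $X_v$ is smooth and projective, containing $G/H$ as the open $G$-orbit and a codimension-one closed $G$-orbit $X'_v$. Since $X'_v$ is a complete homogeneous $G$-variety, it is a generalized flag variety $G/Q$ for some parabolic subgroup $Q \subset G$.

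Next I would identify the two Poincar\'e polynomials. By the construction of Section~\ref{sec:def}, $H_\varnothing = \ker \chi_v$ for the nontrivial character $\chi_v \colon Q \to \C^*$; since $\dim H_\varnothing = \dim Q - 1$, the image of $\chi_v$ equals $\C^*$, so $G/H_\varnothing \to G/Q$ is a principal $\C^*$-bundle and $\tP_{G/H_\varnothing}(t) = (t-1)\,\tP_{G/Q}(t)$. Additivity applied to $X_v = G/H \sqcup G/Q$ gives $\tP_{G/H}(t) = \tP_{X_v}(t) - \tP_{G/Q}(t)$. Invoking Poincar\'e duality for the smooth projective varieties $X_v$ (of dimension $d = \dim G/H$) and $G/Q$ (of dimension $d-1$), a direct calculation yields
$$
\tP_{G/H_\varnothing}(t) \;=\; \tP_{G/H}(t) - t^d\,\tP_{G/H}(1/t),
$$
so that $R_\varnothing(t^{-1}) = 1 - t^d\tP_{G/H}(1/t)/\tP_{G/H}(t)$.

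The main and hardest step is to show that $t^d\tP_{G/H}(1/t)/\tP_{G/H}(t) \in \Z[t^{-1}]$. Writing $\tP_{G/H}(t) = t^s\tilde p(t)$ with $\tilde p(0)\ne 0$, this is equivalent to the palindromicity $\tilde p(t) = t^{d-s}\tilde p(1/t)$, in which case $R_\varnothing(t^{-1}) = 1 - t^{-s}$ is a polynomial in $t^{-1}$ with integer coefficients. Palindromicity of $\tilde p$ is in turn equivalent to the structural identity $\tP_{X_v}(t)\cdot[s]_t = \tP_{G/Q}(t)\cdot[s+1]_t$, where $[k]_t = (t^k-1)/(t-1)$; geometrically this says that $X_v$ and $X'_v$ differ, motivically, by a $\P^s$- versus $\P^{s-1}$-factor. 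I would prove this identity via the Bialynicki--Birula decomposition of $X_v$ with respect to the adapted one-parameter subgroup $\lambda_v$: at every $\lambda_v$-fixed point $p \in X'_v$ where $\lambda_v$ acts with positive weight on the normal direction (which is the case at the sinks of the generic flow from $G/H$), the BB plus cell in $X_v$ has exactly one extra dimension compared with the BB plus cell in $X'_v$, and careful bookkeeping of these cells produces the needed identity. For $H$ connected the claim is already covered by Theorem~\ref{Th2:Poincare}; the remaining disconnected cases can be handled either by the same BB analysis or, as a fallback, by the finite classification of rank-one spherical homogeneous spaces due to Akhiezer.
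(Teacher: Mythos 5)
Your reduction is genuinely different from the paper's proof (which simply runs through Akhiezer--Brion's classification of rank-one spherical homogeneous spaces and verifies $R_\varnothing$ entry by entry in Table \ref{tab:P}), and its first half is correct and rather elegant: combining additivity on $X_v = G/H \sqcup G/Q$ with Poincar\'e duality for the smooth complete varieties $X_v$ and $G/Q$ and with $\tP_{G/H_\varnothing}(t)=(t-1)\tP_{G/Q}(t)$ does yield the clean identity $\tP_{G/H_\varnothing}(t)=\tP_{G/H}(t)-t^{d}\,\tP_{G/H}(1/t)$, which one can check against every line of the paper's table.

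However, the ``main and hardest step'' contains a concrete error and is in any case not proved. Writing $\tP_{G/H}(t)=t^{s}\tilde p(t)$, what the table actually exhibits is that $\tilde p$ is palindromic \emph{up to sign}, $\tilde p(t)=\pm\, t^{d-s}\tilde p(1/t)$, giving $R_\varnothing(t^{-1})=1\mp t^{-s}$; in roughly half the cases the sign is $-$. For instance, for $G=SL(2)\times SL(2)$ and $H=SL(2)$ diagonal one has $\tP_{G/H}(t)=t(t^{2}-1)$, so $\tilde p(t)=t^{2}-1$ is \emph{anti}-palindromic and $R_\varnothing=1+t^{-1}$. Consequently the structural identity you propose to prove, $\tP_{X_v}(t)\cdot[s]_t=\tP_{G/Q}(t)\cdot[s+1]_t$, is false there: the left side is $t^3+t^2+t+1$ while the right side is $(t+1)^3$. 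The Bialynicki--Birula sketch is aimed at this wrong identity, and its central claim (each plus-cell of $X_v$ at the relevant fixed points exceeds the corresponding cell of $X'_v$ by exactly one dimension) already fails in the same example, since it would force $\tP_{X_v}=t\,\tP_{X'_v}$, i.e.\ $t^3+t^2+t+1 = t(t+1)^2$. Even the corrected statement --- palindromicity of $\tilde p$ up to sign --- is a genuinely nontrivial fact that your argument does not establish; ``careful bookkeeping'' is not a proof, and your stated fallback for the disconnected cases is precisely the classification check that constitutes the paper's proof. So as written the argument has a real gap exactly where the difficulty lies.
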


\begin{proof}
Spherical homogeneous spaces $G/H$ of rank one were classified by Akhiezer
\cite{Ak83} and Brion \cite{Br89}. Such a homogeneous space $G/H$ is either horospherical
or has a wonderful compactification. 
Theorem \ref{Th:P-one} is obvious if $G/H$ is horospherical. 
So we are interested only in those homogeneous spaces $G/H$ of rank 
one that admit  
a wonderful compactification. These spaces are listed in~\cite[Table 30.1]{T}. 
In this case the spherical subgroup $H$ has a unique satellite subgroup  
different from $H$:  this is the horospherical subgroup $H_\varnothing$.
For each homogeneous space $G/H$ from the list in \cite[Table 30.1]{T},
we compute the Poincar\'{e} polynomials
$\tP_{G/H}(t)$ and $\tP_{G/H_\varnothing}(t)$,  
and then the ratio 
$$R_\varnothing(t^{-1}) =  \dfrac{\tP_{G/{H_\varnothing}}(t)}{\tP_{G/H}(t)}.$$
The obtained results are described in Table \ref{tab:P}.  
Our calculations show that the ratio $R_\varnothing(t^{-1})$ is a polynomial in
$t^{-1}$ containing only two terms, with integer coefficients.

Let us explain our computations. 
Let $G/H \hookrightarrow X$ be a wonderful embedding of $G/H$ 
with closed $G$-orbit $X'$. We have: 
\begin{align} \label{eq:P-one}
\tP_X(t) = \tP_{G/H}(t) + \tP_{X'}(t)  
= \tP_{G/H}(t) + \dfrac{\tP_{G/H_\varnothing}(t)}{(t-1)}.
\end{align}
Note that $X'$ is a projective homogeneous space $G/P$,  
where $P$ is certain parabolic subgroup of $G$ such that 
$\dim G/P= \dim G/H -1$. 

In some cases, like in Example \ref{exa2:P}, 
from the knowledge of $X$ and $X'$, we compute 
$\tP_X(t)$, $\tP_{X'}(t)$ and so $\tP_{G/H}(t)$ and 
$\tP_{G/H_\varnothing}(t)$ by (\ref{eq:P-one}). 
We can proceed in this way for cases 1, 3  (which corresponds to 
Example \ref{exa2:P}), 7a, 7b, 10 of Table \ref{tab:P}.

If $H$ is connected, it is sometimes easier to compute directly $\tP_{G/H}(t)$ 
using Theorem \ref{Th:Brion-Peyre} and \cite[Theorem 1(c)]{BPe}, instead of 
computing $\tP_{X'}(t)$. 
Then we get $\tP_{G/H_\varnothing}(t)$ from $\tP_X(t)$ 
and $\tP_{G/H}(t)$ by (\ref{eq:P-one}). 
We can argue in this way for cases 2, 5, 6, 7a, 7b, 9, 10, 11, 13.  

Still if $H$ is connected, it is alternatively possible to 
compute $\tP_{G/H_\varnothing}(t)$ even without the knowledge of $X$. 
Indeed, it is sometimes possible to deduce the conjugacy class of 
the parabolic stabilizer $P$ of a point in $X'$ just by dimension 
reasons. Then we get $\tP_{G/H_\varnothing}(t)$ 
since 
$$\tP_{G/H_\varnothing}(t) = \tP_{G/P}(t) (t-1).$$

Consider for example the case 12 of Table \ref{tab:P} where 
$G= {\bf F}_4$ and $H={\bf B}_4$. 
Then $\dim G/H = 52 - 36 = 16$. So $\dim G/P = 15$ and 
$\dim P = 37$. Hence, for dimension reasons, 
$P$ is conjugate to a parabolic subgroup 
whose semisimple Levi part is either of type either ${\bf B}_3$,  
or of type ${\bf C}_3$. 
In both cases, we get (assuming that $P$ contains the standard Borel 
subgroup $B$) that  
$$\tP_{G/P}(t)= \frac{\tP_{G/B}(t)}{\tP_{P/B}(t)}= \frac{(t^2-1)(t^{6}-1)(t^{8}-1)(t^{12}-1)}
{(t-1)(t^{2}-1)(t^{4}-1)(t^{6}-1)} $$
since ${\bf B}_3$ and ${\bf C}_3$ share the same exponents 
$1,3,5$. 
Hence 
$$\tP_{G/H_\varnothing}(t) =\tP_{G/P}(t) (t-1) = (t^4+1)(t^{12}-1).$$
On the other hand, since $H$ is connected we have by \cite{BPe}: 
$$\tP_{G/H}=\frac{(t^2-1)(t^6-1)(t^8-1)(t^{12}-1) t^{24}}{ 
(t^2-1)(t^4-1)(t^6-1)(t^8-1) t^{16}} = \frac{(t^{12}-1) t^8}{t^4-1}.$$
In conclusion, 
$$R_\varnothing(t^{-1})= \frac{t^8-1}{t^8} =1 - t^{-8}.$$
We can proceed similarly for cases 4, 
8a, 8b, 14, 15. 
\end{proof}

\begin{center}
{\tiny
\begin{table}
\begin{tabular}{l|lllll}
 \hline & & & && \\[-0.125em] 
 n$^\circ$ & $G$ & $H$  & $\tP_{G/H}(t)$ &  $\tP_{G/H_\varnothing}(t)$ &
 $R_{\varnothing}(t^{-1})$  \\ 
\hline
 & & & && \\
1 & $SL_2\times SL_2$ & $SL_2 $ & $t (t^{2}-1)$ & $ (t-1)(1+t)^{2}$ & $1+t^{-1}$ \\
 & & & && \\
2 & $PSL_2\times PSL_2$ & $PSL_2 $ & $t (t^{2}-1)$ &$ (t-1)(1+t)^2$ & $1+t^{-1}$  \\
 & & & && \\
3 & $SL_{n}$ & $S(L_1\times L_{n-1}) $ & $ \frac{t^{n-1} (t^{n}-1)}{t-1}$ &
$\frac{(t^{n-1}-1)(t^{n}-1)}{t-1}$ & $1 -t ^{-(n-1)}$\\
& & & && \\
4 & $PSL_2$ & $PO_2$ & $t^{2}$ & $t^{2}-1$ & $1-t^{-2}$ \\ 
 & & & && \\
5 & $Sp_{2n}$ & $Sp_2 \times Sp_{2n-2}$ &
$\frac{t^{2n-2}(t^{2n}-1)}{t^2-1} $ &
$\frac{(t^{2n-2}-1)(t^{2n} -1)}{t^2-1}$ & $1-t^{-(2n-2)}$ \\ 
 & & & && \\
6 & $Sp_{2n}$ & $B(Sp_2) \times Sp_{2n-2}$ & $\frac{t^{2n-1}(t^{2n}-1)}{t-1}$ &
$\frac{(t^{2n-1}-1)(t^{2n}-1)}{t-1}$ & $1-t^{-(2n-1)}$ \\
 & & & && \\
7a & $SO_{2n+1}$ & $SO_{2n}$ & $t^{n}(t^{n}+1)$ & $t^{2n}-1$& $1-t^{-n}$  \\ 
 & & & && \\
7b & $SO_{2n}$ & $SO_{2n-1}$ & $t^{n-1}(t^{n}-1) $ & $(t^{n-1}+1)(t^{n}-1)$ & $1+t^{-(n-1)}$ \\
 & & & && \\
8a & $SO_{2n+1}$ & $S(O_1\times O_{2n})$ & $t^{2n}$  & $t^{2n}-1$ & $1-t^{-2n}$ \\
 & & & && \\
 8b & $SO_{2n}$ & $S(O_1\times O_{2n-1})$ & $t^{n-1}(t^{n}-1)$  &
 $(t^{n-1}+1)(t^{n}-1)$ & $1+ t^{-(n-1)}$ \\
 & & & && \\
9 & $SO_{2n}$ & $GL_{n}\rightthreetimes \wedge^{2} \C^n$ &$t \prod_{i=1}^n(t^{i}+1)$
&$(t-1)\prod_{i=1}^n(t^{i}+1)$ & $1-t^{-1}$ \\
 & & & && \\
 10 & $Spin_7$ & ${\bf G}_2$ & $t^{3}(t^{4}-1)$ & $(t^3+1)(t^4-1)$ & $1+t^{-3}$ \\
  & & & && \\
11 & $SO_7$ & ${\bf G}_2$ & $t^{3}(t^{4}-1)$ &
$(t^3+1)(t^4-1)$ & $1+t^{-3}$  \\ 
& & & && \\
12 & ${\bf F}_4$ & ${\bf B}_4$  & $\frac{t^{8}(t^{12}-1)}{t^{4}-1}$
& $(t^4+1)(t^{12}-1)$ & $1 - t^{-8}$ \\ 
 & & & && \\
13 & ${\bf G}_2$ & $SL_3$ & $t^3(t^3+1)$ &
$(t^{3}+1)(t^3-1)$ & $1-t^{-3}$ \\
& & & && \\
14 & ${\bf G}_2$ & $N(SL(3))$ & $t^{6}$ & $t^{6}-1$ & $1-t^{-6}$ \\ 
& & & && \\
15 & ${\bf G}_2$ & $GL_2\rightthreetimes (\C\oplus \C^2)
\otimes \wedge^{2} \C^2$ & $ \frac{t^2(t^6-1)}{t-1}$ &
$\frac{(t^2-1)(t^6-1)}{t-1}$ & $1-t^{-2}$ \\[0.5em]
\hline
\end{tabular}
\bigskip
\caption{Data for homogeneous spherical spaces of rank one} \label{tab:P}
\end{table}}
\end{center}

\vspace{-1cm}

\end{document}